%
%
\documentclass[letter,final,twosided]{amsart}
\usepackage{amsmath}
\usepackage{paralist}
\usepackage[english]{babel}
\usepackage{amsfonts}
\usepackage{amscd}
\usepackage{amssymb}
\usepackage{amsmath}
\usepackage{amstext}
\usepackage{latexsym}
\usepackage{xspace}
\usepackage{hyperref}

\newcommand{\baseRing}[1]{\ensuremath{\mathbb{#1}}}
\newcommand{\R}{\baseRing{R}}
\newcommand{\HH}{\baseRing{H}}
\newcommand{\Qo}{{\ensuremath{\hat{1}}}}
\newcommand{\Qi}{{\ensuremath{\hat{\imath}}}}
\newcommand{\Qj}{{\ensuremath{\hat{\jmath}}}}
\newcommand{\Qk}{{\ensuremath{\hat{k}}}}
\newcommand{\jgU}{\ensuremath{\mathfrak{U}}\xspace}
\newcommand{\jconj}{\overline}
\newcommand{\jdef}[1]{\index{#1}\emph{#1}}
\newcommand{\stext}[1]{\ensuremath{\quad\text{#1}\quad}}
\newcommand{\trint}{\;\;\makebox[0pt]{$\top$}\makebox[0pt]{$\cap$}\;\;}

\renewcommand{\iff}{\ensuremath{\Leftrightarrow}\xspace}

\newcommand{\CC}{\ensuremath{{\mathcal{A}}}}
\newcommand{\CCp}[1]{\ensuremath{{\mathcal{A}^{#1}}}}
\newcommand{\DC}{\ensuremath{{\mathcal{A}_d}}}
\newcommand{\DCp}[1]{\ensuremath{{\mathcal{A}_d^{#1}}}}
\newcommand{\HLds}[1]{\ensuremath{\overline{h_d^{{#1}}}}}
\newcommand{\HLd}[1]{\ensuremath{{h_d^{{#1}}}}}
\newcommand{\SG}{\ensuremath{G}\xspace}
\newcommand{\jgsg}{\ensuremath{\mathfrak{g}}\xspace}
\newcommand{\lie}[1]{\ensuremath{Lie(#1)}}

\numberwithin{equation}{section}

\providecommand{\abs}[1]{\ensuremath{\left\lvert{#1}\right\rvert}}
\providecommand{\norm}[1]{\ensuremath{\left\lVert{#1}\right\rVert}}
\providecommand{\ip}[2]{\ensuremath{\left\langle{#1},{#2}\right\rangle}}

\DeclareMathOperator{\im}{Im}

\newcommand{\ie}{\textsl{i.e.}\xspace}
\newcommand{\ti}[1]{\widetilde{#1}}
\newcommand{\conj}{\overline}

\newtheorem{theorem}{Theorem}[section]

\newtheorem{lemma}[theorem]{Lemma}
\newtheorem{proposition}[theorem]{Proposition}

\theoremstyle{definition}
\newtheorem{definition}[theorem]{Definition}
\newtheorem{remark}[theorem]{Remark}
\newtheorem{example}[theorem]{Example}

\title[{Discrete connections on principal bundles}] {A geometric
  approach to discrete connections\\on principal bundles}

\author[Javier Fern\'andez and Marcela Zuccalli]{}

\subjclass{Primary: 53B15, 53C05; Secondary: 37J15, 70G45.}

 \keywords{principal bundle\and connection \and discrete dynamical
    system}

 \email{jfernand@ib.edu.ar}
 \email{marcezuccalli@gmail.com}

 \thanks{This research was partially supported by grants from the
   Universidad Nacional de Cuyo, Universidad Nacional de La Plata and
   CONICET}


\begin{document}

\bibliographystyle{amsplain}

\maketitle

\centerline{\scshape Javier Fern\'andez}

\medskip {\footnotesize

  \centerline{Instituto Balseiro, Universidad Nacional de Cuyo --
    C.N.E.A.}

  \centerline{ Av. Bustillo 9500, San Carlos de
    Bariloche, R8402AGP, Rep\'ublica Argentina} }

\medskip 

\centerline{\scshape Marcela Zuccalli} \medskip
{\footnotesize
  \centerline{Departamento de Matem\'atica, Facultad de Ciencias Exactas}
      \centerline{Universidad Nacional de La Plata}
  \centerline{50 y 115, La Plata, Buenos Aires, 1900, Rep\'ublica
    Argentina}
}


\bigskip

\begin{abstract}
  This work revisits, from a geometric perspective, the notion of
  discrete connection on a principal bundle, introduced by M. Leok,
  J. Marsden and A. Weinstein. It provides precise definitions of
  discrete connection, discrete connection form and discrete
  horizontal lift and studies some of their basic properties and
  relationships. An existence result for discrete connections on
  principal bundles equipped with appropriate Riemannian metrics is
  proved.
\end{abstract}


\section{Introduction}
\label{sec:introduction}

The study of symmetries is a central part of many areas of Mathematics
and Physics. In the Differential Geometric setting, principal bundles
provide a powerful instrument to model many symmetric
systems. Connections on principal bundles are very convenient tools,
especially in the topologically nontrivial case. Among other things,
using connections, the equations of motion of mechanical systems can
be written globally and, also, connections capture the complexity of
the bundle via, for instance, the associated curvature.

Roughly speaking, mechanical systems are continuous time dynamical
systems on the tangent bundle $TQ$ of a manifold $Q$, whose dynamics
is defined using a variational principle. In the same spirit, discrete
mechanical systems are usually introduced as discrete time dynamical
systems on $Q\times Q$ whose dynamics is defined using a variational
principle~\cite{ar:marsden_west-discrete_mechanics_and_variational_integrators}.
The main motivation is that, for continuous time, one has velocities
(tangent vectors), whereas when the time is discrete, one has pairs of
points (close to one another).

Let $\SG$ be a Lie group acting on $Q$ via the (left) action
$l^Q:\SG\times Q\rightarrow Q$ in such a way that the quotient mapping
$\pi:Q\rightarrow Q/\SG$ is a principal $\SG$-bundle. The
\jdef{vertical bundle} $\mathcal{V}$ is defined, at each $q\in Q$, by
$\mathcal{V}(q):=T_q l^Q_\SG(q) =\{\xi_Q(q) : \xi \in \lie{\SG}\}
\subset T_qQ$, where $l^Q_\SG(q)$ is the $\SG$-orbit through $q$ and
$\xi_Q(q)$ is the infinitesimal generator of $l^Q$ at $q$. A
\jdef{connection} $\CC$ on the principal $\SG$-bundle $\pi$ is a
$\SG$-equivariant distribution $Hor_\CC$ on $Q$ complementing
$\mathcal{V}$. That is, at each $q\in Q$, there is a subspace
$Hor_\CC(q)\subset T_qQ$ such that, for each $v_q\in T_qQ$, there is a
unique decomposition
\begin{equation}\label{eq:VH_CC_decomposition}
  v_q = \underbrace{\xi_Q(q)}_{\in \mathcal{V}(q)} + 
  \underbrace{v_q-\xi_Q(q)}_{\in Hor_\CC(q)}.
\end{equation}
for some $\xi\in \jgsg :=\lie{\SG}$. 

Trying to extend techniques that were used to analyze the reduction of
symmetric mechanical systems
(\cite{bo:cendra_marsden_ratiu-lagrangian_reduction_by_stages}
and~\cite{ar:cendra_marsden_ratiu-geometric_mechanics_lagrangian_reduction_and_nonholonomic_systems})
to the case of discrete mechanical systems, M. Leok, J. Marsden and
A. Weinstein introduced
in~\cite{ar:leok_marsden_weinstein-a_discrete_theory_of_connections_on_principal_bundles}
and~\cite{th:leok-thesis} a notion of \jdef{discrete connection} on a
principal bundle that mimics the notion of connection. Their
definition states that a discrete connection $\DC$ is a
$\SG$-equivariant subset $Hor_\DC \subset Q\times Q$ that is
complementary to the \jdef{discrete vertical bundle}
$\mathcal{V}_d:=\{(q,l^Q_g(q))\in Q\times Q: g\in \SG\}$ where, for
$g\in\SG$, $l^Q_g:Q \rightarrow Q$ is defined by
$l^Q_g(q):=l^Q(g,q)$. Complementary means that every $(q_0,q_1)\in
Q\times Q$ can be decomposed uniquely in a vertical part and a
horizontal part in such a way that
\begin{equation}\label{eq:pair_decomposition_Ad}
  (q_0,q_1) = \underbrace{(q_0,l^Q_g(q_0))}_{\in \mathcal{V}_d} \cdot
  \underbrace{(q_0,l^Q_{g^{-1}}(q_1))}_{\in Hor_\DC}
\end{equation}
for some $g\in\SG$. The composition of vertical and arbitrary pairs
(based at the same point $q_0$) is defined by $(q_0,l^Q_g(q_0))\cdot
(q_0,q_1) := (q_0,l^Q_g(q_1))$.  The basic intuition is that tangent
vectors in $TQ$ become finite curves and, eventually, pairs of points,
elements of $Q\times Q$.  Vertical vectors based at $q\in Q$ are those
tangent vectors pointing in the direction of the group action which,
for finite time, leads to pairs of the form $(q,l^Q_g(q))$ for $g\in
\SG$. With this motivation,~\eqref{eq:pair_decomposition_Ad} is the
discrete analogue of~\eqref {eq:VH_CC_decomposition}.

Discrete connections have been successfully used to study the
reduction of discrete mechanical systems (see
\cite{ar:marrero_martin_martinez-discrete_lagrangian_and_hamiltonian_mechanics_on_lie_groupoids},~\cite{ar:fernandez_tori_zuccalli-lagrangian_reduction_of_discrete_mechanical_systems}, and~\cite{ar:fernandez_tori_zuccalli-lagrangian_reduction_of_discrete_mechanical_systems_by_stages}).

In their work, Leok, Marsden and Weinstein do not provide a thorough
definition of discrete connection, although they discuss the
equivalence between this notion and other approaches. For instance,
they relate a discrete connection to what they call a discrete
connection form and also to a discrete horizontal lift. They also give
an interpretation in terms of splittings of a certain discrete Atiyah
sequence, although the groupoid setting for this very intriguing
approach is not detailed. The purpose of the current paper is to give
a precise definition of discrete connection on a principal bundle and
analyze some of its more basic properties. Additional geometric
properties like parallel transport, holonomy and curvature will be
discussed elsewhere.

In Section~\ref{sec:geometric_definition} we define discrete
connections and study some elements associated to them: domain and
slices. We can associate other objects to a discrete connections $\DC$
that, in turn, can be used to characterize completely $\DC$. Two such
objects are the discrete connection form and the discrete horizontal
lift, that are analyzed in Sections~\ref{sec:discrete_connection_form}
and~\ref{sec:horizontal_lift} respectively. Last, in
Section~\ref{sec:existence_of_discrete_connections}, we prove an
existence result for discrete connections on principal bundles
equipped with an adequate Riemannian metric.

\vskip .2cm

\emph{Notation:} when $l^Q$ is the left $\SG$-action on $Q$,
$l^{Q\times Q}$ is the induced diagonal $\SG$-action on $Q\times
Q$. The quotient mapping from a space to its space of orbits is
denoted by $\pi$ and $p_j$ is the projection from a Cartesian product
onto its $j$-th factor. Given the maps $f_i:X_i\rightarrow Y_i$ (for
$i=1,2$), their Cartesian product is $f_1\times f_2: X_1\times X_2
\rightarrow Y_1\times Y_2$ with $(f_1\times f_2)(x_1,x_2) :=
(f_1(x_1),f_2(x_2))$.


\section{Geometric definition}
\label{sec:geometric_definition}

In order to use discrete connections in geometry, it is important that
they are well defined and that the objects that we associate to them
(discrete connection form, horizontal lift, etc.) be smooth.

\begin{definition}
  \label{def:discrete_connection}
  Let $Hor\subset Q\times Q$ be an $l^{Q\times Q}$-invariant
  submanifold containing the diagonal $\Delta_Q\subset Q\times Q$.  We
  say that $Hor$ defines the \jdef{discrete connection} $\DC$ on the
  principal bundle $\pi:Q\rightarrow Q/\SG$ if $(id_Q\times
  \pi)|_{Hor}:Hor\rightarrow Q\times (Q/\SG)$ is an injective local
  diffeomorphism. We denote $Hor$ by $Hor_{\DC}$.
\end{definition}

\begin{remark}
  It is possible to consider the slightly more general notion of
  \jdef{affine discrete connection} that replaces the condition
  $\Delta_Q \subset Hor$ with the requirement that $Hor$ contains the
  graph of a smooth map $\gamma:Q\rightarrow Q$. Discrete connections
  correspond to the case $\gamma:=id_Q$. This more general notion has
  been used
  in~\cite{ar:fernandez_tori_zuccalli-lagrangian_reduction_of_discrete_mechanical_systems}
  in order to construct discrete connections associated to
  nonvanishing conserved discrete momenta of discrete mechanical
  systems.
\end{remark}

The following Lemma, whose proof is straightforward, provides a
convenient way to characterize discrete connections.

\begin{lemma}\label{le:dconn_diffeo_vs_local_diffeo_and_intersection}
  The requirement that $(id_Q\times \pi)|_{Hor}:Hor \rightarrow
  Q\times (Q/\SG)$ be an injective local diffeomorphism in
  Definition~\ref{def:discrete_connection} is equivalent to the
  following two assertions being true simultaneously.
  \begin{enumerate}
  \item \label{it:dconn_diffeo_vs_local_diffeo_and_intersection-ldiff}
    $(id_Q\times \pi)|_{Hor}:Hor\rightarrow Q\times (Q/\SG)$ is a local
    diffeomorphism and
  \item \label{it:dconn_diffeo_vs_local_diffeo_and_intersection-inter}
    $l^{Q\times Q_2}_g(Hor)\cap Hor = \emptyset$ for all $g\neq e$ in
    $\SG$. Here $l^{Q\times Q_2}_g(q_0,q_1):=(q_0,l^Q_g(q_1))$.
  \end{enumerate}
\end{lemma}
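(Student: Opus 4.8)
The plan is to show that the single condition ``$(id_Q\times\pi)|_{Hor}$ is an injective local diffeomorphism'' decomposes into the conjunction of its two constituent parts, ``local diffeomorphism'' and ``injective'', and then to reinterpret injectivity in the presence of the first part as the disjointness condition~\eqref{it:dconn_diffeo_vs_local_diffeo_and_intersection-inter}. The forward direction is immediate: if $(id_Q\times\pi)|_{Hor}$ is an injective local diffeomorphism then in particular it is a local diffeomorphism, which is assertion~\eqref{it:dconn_diffeo_vs_local_diffeo_and_intersection-ldiff}; and its injectivity forces~\eqref{it:dconn_diffeo_vs_local_diffeo_and_intersection-inter}, since if $(q_0,q_1)\in l^{Q\times Q_2}_g(Hor)\cap Hor$ for some $g\neq e$, then both $(q_0,q_1)$ and $(q_0,l^Q_{g^{-1}}(q_1))$ lie in $Hor$, and these two points have the same image under $id_Q\times\pi$ because $\pi(q_1)=\pi(l^Q_{g^{-1}}(q_1))$; they are distinct since $g\neq e$ and the $\SG$-action on $Q$ is free (being the action of a principal bundle), contradicting injectivity.

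For the converse, assume~\eqref{it:dconn_diffeo_vs_local_diffeo_and_intersection-ldiff} and~\eqref{it:dconn_diffeo_vs_local_diffeo_and_intersection-inter}. Only injectivity needs to be established. Suppose $(q_0,q_1),(q_0',q_1')\in Hor$ have the same image under $id_Q\times\pi$. Then $q_0=q_0'$ and $\pi(q_1)=\pi(q_1')$, so $q_1'=l^Q_g(q_1)$ for a unique $g\in\SG$ (uniqueness again from freeness of the action). Hence $(q_0',q_1')=(q_0,l^Q_g(q_1))=l^{Q\times Q_2}_g(q_0,q_1)\in l^{Q\times Q_2}_g(Hor)$, and it also lies in $Hor$; by~\eqref{it:dconn_diffeo_vs_local_diffeo_and_intersection-inter} this is impossible unless $g=e$, i.e.\ $(q_0',q_1')=(q_0,q_1)$. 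Thus $(id_Q\times\pi)|_{Hor}$ is injective, and combined with~\eqref{it:dconn_diffeo_vs_local_diffeo_and_intersection-ldiff} it is an injective local diffeomorphism.

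The only subtlety worth flagging is the role of the freeness of the $\SG$-action on $Q$, which is part of the standing hypothesis that $\pi:Q\to Q/\SG$ is a principal $\SG$-bundle: it is what guarantees that the element $g$ with $q_1'=l^Q_g(q_1)$ is unique, so that ``$\pi(q_1)=\pi(q_1')$ and $g\neq e$'' genuinely produces a point of $l^{Q\times Q_2}_g(Hor)\cap Hor$. Beyond that, the argument is purely set-theoretic bookkeeping about fibers of $id_Q\times\pi$, together with the observation that $l^{Q\times Q_2}_g$ permutes those fibers; no analysis or smoothness considerations enter the equivalence of injectivity with~\eqref{it:dconn_diffeo_vs_local_diffeo_and_intersection-inter}, the local-diffeomorphism clause being simply carried along unchanged on both sides. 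This is why the proof is ``straightforward'' as stated.
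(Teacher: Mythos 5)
Your proof is correct: the paper omits the argument entirely (declaring it ``straightforward''), and what you have written is precisely the expected elementary verification, including the one genuinely relevant observation that freeness of the $\SG$-action (from the principal bundle hypothesis) is what makes the failure of injectivity on a fiber of $id_Q\times\pi$ equivalent to a nonempty intersection $l^{Q\times Q_2}_g(Hor)\cap Hor$ for some $g\neq e$. Nothing to add.
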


Recall that a smooth map $f:X\rightarrow Y$ is \jdef{transversal} to
the submanifold $Z\subset Y$ if $\im(df(x)) + T_{f(x)}Z = T_{f(x)}Y$,
for all $x\in f^{-1}(Z)$; this situation is denoted by $f\trint
Z$. When $f\trint Z$ and $f^{-1}(Z)\neq \emptyset$, $f^{-1}(Z)\subset
X$ is a regular submanifold (see Theorem on page 28
of~\cite{bo:Guillemin-Pollack-differential_topology}).  When
$Z_1,Z_2\subset Y$ are submanifolds, they \jdef{intersect
  transversely} when $i_{Z_1}\trint Z_2$, where $i_{Z_1} :
Z_1\rightarrow Y$ is the inclusion map; this is denoted by $Z_1\trint
Z_2$. In particular, if not empty, $Z_1\cap Z_2\subset Y$ is a
submanifold when $Z_1\trint Z_2$. We refer
to~\cite{bo:Guillemin-Pollack-differential_topology} for more on
transversality. For any $q\in Q$ define the smooth map
$i_q:Q\rightarrow Q\times Q$ by $i_q(q') = (q,q')$.  The next result
proves the basic properties of a discrete connection.

\begin{proposition}\label{prop:dconn_basic_prop}
  Let $Hor_\DC$ be a discrete connection on the principal $\SG$-bundle
  $\pi:Q\rightarrow Q/\SG$. Then, the following statements are true.
  \begin{enumerate}
  \item \label{it:dconn_basic_prop-open_sets} Let $\jgU:= l^{Q\times
      Q_2}_\SG(Hor_\DC) = \{(q_0,l^Q_g(q_1))\in Q\times Q:
    (q_0,q_1)\in Hor_\DC,\, g\in\SG\} \subset Q\times Q$,
    $\jgU':=(id_Q\times \pi)(Hor_\DC) \subset Q\times (Q/\SG)$, and
    $\jgU'':=(\pi\times \pi)(Hor_\DC)\subset (Q/\SG)\times
    (Q/\SG)$. Then $\jgU$, $\jgU'$ and $\jgU''$ are open subspaces of
    the corresponding spaces.
  \item \label{it:dconn_basic_prop-GxG_action} $\jgU$ is
    $\SG\times\SG$-invariant for the product action on $Q\times
    Q$. Also, $\jgU = (\pi\times \pi)^{-1}(\jgU'')$.
  \item \label{it:dconn_basic_prop-trint_i_q} For any $q\in Q$, $i_q
    \trint Hor_\DC$. Furthermore, $Hor^2(q):= i_q^{-1}(Hor_{\DC})
    \subset Q$ is a submanifold of dimension $\dim(Q)-\dim(\SG)$.
  \item \label{it:dconn_basic_prop-trint_Hor^2(q)_V_d(q)} For any
    $q\in Q$, $Hor^2(q) \trint \mathcal{V}_d(q)$, where
    $\mathcal{V}_d(q) := l^Q_\SG(q)$. More precisely, $Hor^2(q) \cap
    \mathcal{V}_d(q) = \{q\}$ and $T_qHor^2(q) \oplus
    T_q\mathcal{V}_d(q) = T_qQ$.
  \end{enumerate}
\end{proposition}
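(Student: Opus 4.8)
The plan is to prove the four statements in order, using Lemma~\ref{le:dconn_diffeo_vs_local_diffeo_and_intersection} together with three standard facts about the principal bundle $\pi:Q\rightarrow Q/\SG$: the $\SG$-action is free, $\pi$ is an open map, and $id_Q\times\pi$ is a surjective submersion. For the openness claims the first step is to record the set-theoretic identities
\[
  \jgU = (id_Q\times\pi)^{-1}(\jgU') = (\pi\times\pi)^{-1}(\jgU'').
\]
The point is that $(q_0,q_1)\in\jgU$ precisely when there is some $q_1'$ with $(q_0,q_1')\in Hor_\DC$ and $\pi(q_1)=\pi(q_1')$, which says exactly that $(q_0,\pi(q_1))\in\jgU'$; and, using the diagonal $\SG$-invariance of $Hor_\DC$, any group element acting on the first coordinate can be absorbed into the second, which yields the second identity. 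Granting these, $\jgU'$ is open because $(id_Q\times\pi)|_{Hor_\DC}$, being a local diffeomorphism, is an open map; $\jgU''=(\pi\times id_{Q/\SG})(\jgU')$ is open because $\pi$ is open; and $\jgU$ is then open as the preimage of an open set under a continuous map. The identity $\jgU=(\pi\times\pi)^{-1}(\jgU'')$ also gives the $\SG\times\SG$-invariance of $\jgU$, since $\jgU''\subset(Q/\SG)\times(Q/\SG)$ and $\pi\times\pi$ is constant on $\SG\times\SG$-orbits.

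For the transversality $i_q\trint Hor_\DC$ the key geometric input is that the kernel of $d(id_Q\times\pi)$ at a point $(q,q')$ is exactly $\{0\}\times\mathcal{V}(q')$, of dimension $\dim\SG$ (here freeness is used). Since $(id_Q\times\pi)|_{Hor_\DC}$ is a local diffeomorphism, its differential is injective on $T_{(q,q')}Hor_\DC$, so $T_{(q,q')}Hor_\DC\cap(\{0\}\times\mathcal{V}(q'))=\{0\}$; as $\dim Hor_\DC=\dim Q+\dim(Q/\SG)=2\dim Q-\dim\SG$, a dimension count upgrades this to the direct sum decomposition $T_{(q,q')}Hor_\DC\oplus(\{0\}\times\mathcal{V}(q'))=T_{(q,q')}(Q\times Q)$. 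Since $\im(di_q(q'))=\{0\}\times T_{q'}Q$ contains $\{0\}\times\mathcal{V}(q')$, transversality follows. Because $(q,q)\in\Delta_Q\subset Hor_\DC$, the set $Hor^2(q)=i_q^{-1}(Hor_\DC)$ is nonempty, hence a regular submanifold of $Q$ whose codimension matches the codimension $\dim\SG$ of $Hor_\DC$ in $Q\times Q$; thus $\dim Hor^2(q)=\dim Q-\dim\SG$.

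For the last statement one first shows $Hor^2(q)\cap\mathcal{V}_d(q)=\{q\}$. If $q'=l^Q_g(q)$ belongs to $Hor^2(q)$, then $l^{Q\times Q_2}_{g^{-1}}(q,q')=(q,q)$, so $(q,q)$ lies in both $l^{Q\times Q_2}_{g^{-1}}(Hor_\DC)$ and $Hor_\DC$; by Lemma~\ref{le:dconn_diffeo_vs_local_diffeo_and_intersection}\eqref{it:dconn_diffeo_vs_local_diffeo_and_intersection-inter} this forces $g=e$, i.e. $q'=q$. For the tangent decomposition one specializes the direct sum of the previous step to $q'=q$ and pulls it back by $di_q(q)$: since $di_q(q)(v)=(0,v)$ and $T_qHor^2(q)=\{v\in T_qQ:(0,v)\in T_{(q,q)}Hor_\DC\}$, a vector in $T_qHor^2(q)\cap\mathcal{V}(q)$ produces an element of $T_{(q,q)}Hor_\DC\cap(\{0\}\times\mathcal{V}(q))=\{0\}$, so the sum $T_qHor^2(q)+T_q\mathcal{V}_d(q)$ is direct; since $\dim T_qHor^2(q)+\dim\mathcal{V}(q)=(\dim Q-\dim\SG)+\dim\SG=\dim Q$, it exhausts $T_qQ$, which gives $Hor^2(q)\trint\mathcal{V}_d(q)$.

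The only substantive step is the complementarity of $T_{(q,q')}Hor_\DC$ with the vertical kernel $\{0\}\times\mathcal{V}(q')$ of $id_Q\times\pi$ along $Hor_\DC$; once it is in hand, both transversality assertions and all the dimension counts are immediate, while the openness and $\SG\times\SG$-invariance claims are purely formal consequences of the group action and of Lemma~\ref{le:dconn_diffeo_vs_local_diffeo_and_intersection}. A point to watch throughout is the distinction between $\mathcal{V}_d(q)=l^Q_\SG(q)$, a submanifold of $Q$, and its tangent space $T_q\mathcal{V}_d(q)=\mathcal{V}(q)$, together with the equality $\dim\mathcal{V}(q)=\dim\SG$, which again uses that the principal $\SG$-action is free.
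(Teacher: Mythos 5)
Your proposal is correct and follows essentially the same route as the paper: everything rests on $(id_Q\times\pi)|_{Hor_\DC}$ being an injective local diffeomorphism, the openness and invariance claims are the same formal manipulations, and the set identity $Hor^2(q)\cap\mathcal{V}_d(q)=\{q\}$ is obtained from condition~\eqref{it:dconn_diffeo_vs_local_diffeo_and_intersection-inter} of Lemma~\ref{le:dconn_diffeo_vs_local_diffeo_and_intersection} exactly as in the paper. The only (harmless) difference is organizational: where the paper explicitly decomposes each tangent vector using the inverse of $d\bigl((id_Q\times\pi)|_{Hor_\DC}\bigr)$, you establish once that $T_{(q,q')}Hor_\DC$ meets $\ker d(id_Q\times\pi)(q,q')=\{0\}\times\mathcal{V}(q')$ trivially and upgrade to a direct sum by counting dimensions, then reuse that complement for both transversality statements.
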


\begin{proof}
  Being $\DC$ a discrete connection, $\jgU'$ is open and, as $\jgU:=
  l^{Q\times Q_2}_\SG(Hor_\DC) = (id_Q\times \pi)^{-1}(\jgU')$ with
  $id_Q\times \pi$ continuous, $\jgU$ is also open. Being $\pi$ a
  fiber bundle, it is an open map and so is $\pi\times \pi$;
  consequently, $(\pi\times \pi)(\jgU) =(\pi\times \pi)(Hor_\DC) =
  \jgU''$ is open, proving
  part~\ref{it:dconn_basic_prop-open_sets}. The
  $\SG\times\SG$-invariance of $\jgU$ follows from the
  $\SG$-invariance of $Hor_\DC$ by direct computation. This
  $\SG\times\SG$-invariance together with $(\pi\times \pi)(\jgU) =
  \jgU''$ lead to the proof of
  point~\ref{it:dconn_basic_prop-GxG_action}.

  Let $q'\in i_q^{-1}(Hor_\DC)$. For $(v,v')\in T_{(q,q')}(Q\times
  Q)$, as $d((id_Q\times \pi)|_{Hor_\DC})(q,q')$ is an isomorphism,
  there is a unique $(\ti{v},\ti{v'}) \in T_{(q,q')}Hor_\DC$ such that
  $d((id_Q\times \pi)|_{Hor_\DC})(q,q')(\ti{v},\ti{v'}) = d(id_Q\times
  \pi)(q,q')(v,v')$. It is easy to check that $\ti{v} = v$, so that
  $(v,v') = (v,\ti{v'}) + (0,v'-\ti{v'})$, where the first term is in
  $T_{(q,q')} Hor_\DC$ and the second in $\im(di_q(q')))$. This proves
  the transversality condition in
  point~\ref{it:dconn_basic_prop-trint_i_q}. The rest of this point is
  a consequence of the transversality condition (see Theorem on page
  28 of~\cite{bo:Guillemin-Pollack-differential_topology}) and the
  fact that $q\in i_q^{-1}(Hor_\DC)$.

  By
  condition~\ref{it:dconn_diffeo_vs_local_diffeo_and_intersection-inter}
  in Lemma~\ref{le:dconn_diffeo_vs_local_diffeo_and_intersection},
  $Hor^2(q) \cap \mathcal{V}_d(q) = \{l^Q_g(q): (q,l^Q_g(q)) \in
  Hor_\DC\} =\{q\}$. For $v'\in T_qQ$, as $d((id_Q\times
  \pi)|_{Hor_\DC})(q,q)$ is an isomorphism, there is a unique
  $(\ti{v},\ti{v'}) \in T_{(q,q)}Hor_\DC$ such that $d((id_Q\times
  \pi)|_{Hor_\DC})(q,q)(\ti{v},\ti{v'}) = d((id_Q\times
  \pi))(q,q)(0,v')$. Then, $\ti{v} = 0$ and $\ti{v'} - v' \in
  \ker(d\pi(q)) = T_q(l^Q_\SG(q))$. Hence $di_q(q)(v') = (0,v') =
  (0,\ti{v'}) + (0, v' - \ti{v'})$, with $(0,\ti{v'}) \in
  T_{(q,q)}Hor_\DC$. Consequently, $v' = \ti{v'} + (v'-\ti{v'})$ with
  $v'-\ti{v'} \in T_q\mathcal{V}_d(q) = T_q(l^Q_\SG(q))$ and $\ti{v'}
  \in T_qHor^2(q) = (di_q(q))^{-1}(T_{(q,q)}Hor_\DC)$, proving the
  transversality part of
  point~\ref{it:dconn_basic_prop-trint_Hor^2(q)_V_d(q)}. The direct
  sum property follows from the dimensions of the subspaces.
\end{proof}

Given a discrete connection $\DC$, the open subset $\jgU\subset
Q\times Q$ defined in part~\ref{it:dconn_basic_prop-open_sets} of
Proposition~\ref{prop:dconn_basic_prop} will be called the
\jdef{domain} of $\DC$. The submanifolds $Hor^2(q)\subset Q$
introduced in part~\ref{it:dconn_basic_prop-trint_i_q} of the same
result will be the \jdef{horizontal slices}.

\begin{proposition}\label{prop:CD_imp_pair_decomposition}
  Let $\DC$ be a discrete connection with domain $\jgU$ on the
  principal $\SG$-bundle $\pi:Q\rightarrow Q/\SG$. For any
  $(q_0,q_1)\in \jgU$, there is a unique $g\in\SG$ such
  that~\eqref{eq:pair_decomposition_Ad} holds.
\end{proposition}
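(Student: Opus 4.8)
The plan is to first pin down what it means for~\eqref{eq:pair_decomposition_Ad} to ``hold''. Using the composition rule $(q_0,l^Q_g(q_0))\cdot(q_0,q_1'):=(q_0,l^Q_g(q_1'))$ one finds, for \emph{every} $g\in\SG$, that $(q_0,l^Q_g(q_0))\cdot(q_0,l^Q_{g^{-1}}(q_1)) = (q_0,l^Q_g(l^Q_{g^{-1}}(q_1))) = (q_0,q_1)$; thus the equality of pairs is automatic and the content of~\eqref{eq:pair_decomposition_Ad} is entirely carried by the underbraced memberships. The first, $(q_0,l^Q_g(q_0))\in\mathcal{V}_d$, holds for all $g$ by the very definition of $\mathcal{V}_d$, so the Proposition reduces to showing that there is exactly one $g\in\SG$ with $(q_0,l^Q_{g^{-1}}(q_1))\in Hor_\DC$.

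For existence I would invoke the description of the domain in part~\ref{it:dconn_basic_prop-open_sets} of Proposition~\ref{prop:dconn_basic_prop}: since $(q_0,q_1)\in\jgU = l^{Q\times Q_2}_\SG(Hor_\DC)$, there are $h\in\SG$ and a pair of $Hor_\DC$ mapped to $(q_0,q_1)$ by $l^{Q\times Q_2}_h$; as $l^{Q\times Q_2}_h$ fixes the first coordinate, that pair has the form $(q_0,\ti{q}_1)$ with $q_1 = l^Q_h(\ti{q}_1)$, so $\ti{q}_1 = l^Q_{h^{-1}}(q_1)$. Taking $g:=h$ gives $(q_0,l^Q_{g^{-1}}(q_1)) = (q_0,\ti{q}_1)\in Hor_\DC$, as required.

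For uniqueness, suppose $g_1,g_2\in\SG$ both satisfy the membership condition. Set $(p_0,p_1):=(q_0,l^Q_{g_1^{-1}}(q_1))\in Hor_\DC$ and $k:=g_2^{-1}g_1$. Since $q_1 = l^Q_{g_1}(p_1)$, a short computation with the action yields $(q_0,l^Q_{g_2^{-1}}(q_1)) = (p_0,l^Q_k(p_1)) = l^{Q\times Q_2}_k(p_0,p_1)$, so this pair lies both in $l^{Q\times Q_2}_k(Hor_\DC)$ and in $Hor_\DC$. By part~\ref{it:dconn_diffeo_vs_local_diffeo_and_intersection-inter} of Lemma~\ref{le:dconn_diffeo_vs_local_diffeo_and_intersection} this forces $k=e$, that is, $g_1=g_2$.

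The argument is essentially formal once the reading of~\eqref{eq:pair_decomposition_Ad} is fixed, and I do not anticipate any genuine obstacle: the geometric substance is already absorbed in the disjointness condition of Lemma~\ref{le:dconn_diffeo_vs_local_diffeo_and_intersection} and in the openness and invariance of the domain established in Proposition~\ref{prop:dconn_basic_prop}. The only point deserving care is the bookkeeping of which factor the group acts on, so that the ``horizontal part'' is indeed a pair based at $q_0$ and the identity $l^{Q\times Q_2}_k(Hor_\DC)\cap Hor_\DC=\emptyset$ can legitimately be applied.
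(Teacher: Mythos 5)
Your proof is correct and follows essentially the same route as the paper: existence from the definition of the domain $\jgU$ as $l^{Q\times Q_2}_\SG(Hor_\DC)$ together with the composition rule, and uniqueness from the fact that two horizontal pairs over $(q_0,\pi(q_1))$ must coincide. The only cosmetic difference is that for uniqueness you invoke the disjointness condition of Lemma~\ref{le:dconn_diffeo_vs_local_diffeo_and_intersection}, whereas the paper appeals directly to the injectivity of $(id_Q\times\pi)|_{Hor_\DC}$; these are equivalent by that very lemma, so the arguments are interchangeable.
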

\begin{proof}
  The existence of $g$ follows from the definitions of $\jgU$ and the
  composition $\cdot$. The uniqueness of $g$ is a consequence of
  $(id_Q\times \pi)|_{Hor_\DC}$ being a injective.
\end{proof}


\section{Discrete connection form}
\label{sec:discrete_connection_form}

A convenient way of describing and using a connection on a principal
$\SG$-bundle $\pi:Q\rightarrow Q/\SG$ is through the associated
\jdef{connection $1$-form}, that is a Lie algebra valued $1$-form
$\CC:TQ\rightarrow \jgsg$ such that $\CC(v_q) = \xi$, where $\xi$ is
as in~\eqref{eq:VH_CC_decomposition}
(see~\cite{bo:kobayashi_nomizu-foundations-v1} for more details). In
the same spirit, when $\DC$ is a discrete connection on the same
bundle, the element $g\in\SG$ in~\eqref{eq:pair_decomposition_Ad},
captures the vertical part of a pair $(q_0,q_1)$ in the sense that
``what is left'' is horizontal. The next definition makes this notion
more precise.

\begin{definition}
  \label{def:associated_discrete_connection_form}
  Given a discrete connection $\DC$ with domain $\jgU$ on the
  principal $\SG$-bundle $\pi:Q\rightarrow Q/\SG$, we define its
  \jdef{associated discrete connection form}
  \begin{equation}
    \label{eq:DC_associated_to_DC}
    \DC:\jgU\subset Q\times Q\rightarrow \SG \stext{ by }
    \DC(q_0,q_1):=g,
  \end{equation}
  where $g$ is the element of $\SG$ that appears in the
  decomposition~\eqref{eq:pair_decomposition_Ad}.
\end{definition}

When $\pi:Q\rightarrow Q/\SG$ is a principal $\SG$-bundle, the fibered
product of $\pi$ with itself ---that is, the pairs $(q_0,q_1)$ such
that $\pi(q_0) = \pi(q_1)$--- is denoted by $Q\times_{\pi\times \pi}
Q$. Let $\kappa:Q\times_{\pi\times \pi} Q\rightarrow \SG$ be defined
by $\kappa(q_0,q_1):=g$ if and only if $l^Q_g(q_0)=q_1$. It is easy
to check that $\kappa$ is a smooth function.

\begin{lemma}\label{le:CD_is_smooth}
  Given a discrete connection $\DC$ with domain $\jgU$ on the
  principal $\SG$-bundle $\pi:Q\rightarrow Q/\SG$, its associated
  discrete connection form $\DC:\jgU\rightarrow \SG$ is smooth.
\end{lemma}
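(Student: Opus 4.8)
The plan is to identify $\DC:\jgU\rightarrow\SG$ as a composition of smooth maps and a locally defined smooth inverse. The key observation is that the decomposition~\eqref{eq:pair_decomposition_Ad} says exactly that, for $(q_0,q_1)\in\jgU$, the element $g=\DC(q_0,q_1)$ is characterized by $(q_0,l^Q_{g^{-1}}(q_1))\in Hor_\DC$. Equivalently, writing things in terms of the map $l^{Q\times Q_2}$ from Lemma~\ref{le:dconn_diffeo_vs_local_diffeo_and_intersection}, the pair $(q_0,q_1)$ lies in the single $\SG$-orbit (under $l^{Q\times Q_2}$) meeting $Hor_\DC$ at the unique point $(q_0,l^Q_{g^{-1}}(q_1))$. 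So the first step is to construct, locally around any point of $\jgU$, a smooth ``horizontal projection'' $h:\jgU\supset W\rightarrow Hor_\DC$ sending $(q_0,q_1)$ to that unique intersection point.

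\textbf{Step 1: the local horizontal projection.} Fix $(\bar q_0,\bar q_1)\in\jgU$. Since $(id_Q\times\pi)|_{Hor_\DC}$ is an injective local diffeomorphism, there is an open neighborhood $V\subset Hor_\DC$ of the point $h(\bar q_0,\bar q_1)$ that $(id_Q\times\pi)$ maps diffeomorphically onto an open subset $V'\subset \jgU'=(id_Q\times\pi)(Hor_\DC)$; let $\sigma:V'\rightarrow V$ be the inverse. Then on the open set $W:=(id_Q\times\pi)^{-1}(V')\cap\jgU$ (which contains $(\bar q_0,\bar q_1)$ because $(id_Q\times\pi)(\bar q_0,\bar q_1)=(id_Q\times\pi)(h(\bar q_0,\bar q_1))\in V'$), the composition $h:=\sigma\circ(id_Q\times\pi)|_W:W\rightarrow Hor_\DC$ is smooth, and by construction $(id_Q\times\pi)(h(q_0,q_1))=(id_Q\times\pi)(q_0,q_1)$, so $h(q_0,q_1)$ is the unique point of $Hor_\DC$ in the $l^{Q\times Q_2}_\SG$-orbit of $(q_0,q_1)$ with the same first coordinate $q_0$; in particular $p_1\circ h=p_1$ on $W$ and $\pi\circ p_2\circ h=\pi\circ p_2$ on $W$.

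\textbf{Step 2: recover $g$ via $\kappa$.} By Step 1, for $(q_0,q_1)\in W$ the point $h(q_0,q_1)=(q_0,l^Q_{g^{-1}}(q_1))$ with $g=\DC(q_0,q_1)$; hence the pair $\bigl(p_2(h(q_0,q_1)),\,q_1\bigr)=\bigl(l^Q_{g^{-1}}(q_1),\,q_1\bigr)$ lies in $Q\times_{\pi\times\pi}Q$, and $\kappa\bigl(l^Q_{g^{-1}}(q_1),q_1\bigr)=g$ by definition of $\kappa$. Therefore, on $W$,
\begin{equation*}
  \DC(q_0,q_1)=\kappa\bigl(p_2(h(q_0,q_1)),\,p_2(q_0,q_1)\bigr),
\end{equation*}
exhibiting $\DC|_W$ as a composition of the smooth maps $(id_W,\,p_2)$, $h\times id$, $p_2\times p_2$ and $\kappa$ (using that $\kappa$ is smooth, as noted before the statement). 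Since smoothness is local and the sets $W$ cover $\jgU$ as $(\bar q_0,\bar q_1)$ ranges over $\jgU$, we conclude $\DC:\jgU\rightarrow\SG$ is smooth.

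\textbf{Main obstacle.} The only delicate point is Step 1: one must make sure that the inverse branch $\sigma$ of $(id_Q\times\pi)|_{Hor_\DC}$ chosen near $h(\bar q_0,\bar q_1)$ actually pulls back, via $(id_Q\times\pi)$ on $Q\times Q$, to a neighborhood of $(\bar q_0,\bar q_1)$ on which $h$ lands in the correct local sheet of $Hor_\DC$ — i.e.\ that $h(q_0,q_1)$ remains the intersection point with first coordinate $q_0$ rather than some other sheet. This is guaranteed precisely by the injectivity of $(id_Q\times\pi)|_{Hor_\DC}$ (equivalently, condition~\ref{it:dconn_diffeo_vs_local_diffeo_and_intersection-inter} of Lemma~\ref{le:dconn_diffeo_vs_local_diffeo_and_intersection}): two points of $Hor_\DC$ with the same image under $id_Q\times\pi$ coincide, so the local sheet is unambiguous and $h$ is well defined and continuous on $W$. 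Everything else is a routine composition-of-smooth-maps argument.
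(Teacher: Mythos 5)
Your proof is correct and is essentially the paper's argument: the paper also writes $\DC(q_0,q_1)=\kappa\big(p_2\big(((id_Q\times\pi)|_{Hor_\DC})^{-1}((id_Q\times\pi)(q_0,q_1))\big),q_1\big)$ and concludes by composition of smooth maps, the only cosmetic difference being that you work with local inverse branches of the injective local diffeomorphism while the paper invokes its global smooth inverse (a diffeomorphism onto the open image $\jgU'$) directly. No gap.
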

\begin{proof}
  The function $\DC$ is a composition of smooth functions. Indeed, for
  all $(q_0,q_1)\in\jgU$, we have $\DC(q_0,q_1) =
  \kappa\big(p_2(((id_Q\times \pi)|_{Hor_\DC})^{-1}((id_Q\times
  \pi)(q_0,q_1))),q_1\big)$.
\end{proof}

\begin{example}\label{ex:trivial_bundle-discrete_connection-C}
  Let $R$ be a smooth connected manifold and $\SG$ a Lie group. Define
  $Q:=R\times \SG$ and consider the left $\SG$-action on $Q$ defined
  by $l^Q_g(r,g'):=(r,g g')$. This action turns $Q$ into the principal
  $\SG$-bundle $p_1:Q\rightarrow R$. Let $\mathcal{U}''\subset R\times
  R$ be an open subset containing the diagonal $\Delta_R$ and
  $C:\mathcal{U}''\rightarrow \SG$ be a smooth function such that
  $C(r_0,r_0) = e$ for all $r_0\in R$. Consider
  \begin{equation}\label{eq:trivial_bundle-discrete_connection-C-hor}
    \begin{split}
      Hor:=\{((r_0,g_0),(r_1,g_1))\in Q\times Q: & (r_0,r_1)\in
        \mathcal{U}''\text{ and } \\&  g_0 = g_1
        C(r_0,r_1)\}\subset Q\times Q.
    \end{split}
  \end{equation}
  $Hor\subset Q\times Q$ is a regular submanifold because $Hor$ is the
  graph of the smooth map $((r_0,g_0),r_1)\mapsto ((r_0,g_0),(r_1,g_0
  (C(r_0,r_1))^{-1}))$. It is immediate that $Hor$ is $l^{Q\times
    Q}$-invariant and contains $\Delta_Q$. Furthermore, as
  $(id_Q\times \pi)|_{Hor}:Hor \rightarrow Q\times (Q/\SG)$
  specializes to
  \begin{equation*}
    ((r_0,g_0),(r_1,g_0 (C(r_0,r_1))^{-1})) \mapsto
    ((r_0,g_0),r_1)
  \end{equation*}
  that is a diffeomorphism with inverse $((r_0,g_0),r_1) \mapsto
  ((r_0,g_0),(r_1,g_0 (C(r_0,r_1))^{-1}))$, we conclude that $Hor$
  defines a discrete connection $\DCp{C}$ on the (trivial) principal
  $\SG$-bundle $p_1:Q\rightarrow R$. It is easy to check that the
  domain of $\DCp{C}$ is $\jgU = (p_1\times p_1)^{-1}(\mathcal{U}'') =
  \{((r_0,g_0),(r_1,g_1)) \in Q\times Q : (r_0,r_1)\in
  \mathcal{U}''\}$, $\jgU' = \{ ((r_0,g_0),r_1) \in Q\times (Q/\SG) :
  (r_0,r_1)\in \mathcal{U}''\}$ and $\jgU'' = \mathcal{U}'' \subset
  R\times R$. In the special case when $\mathcal{U}''=R\times R$ and
  $C(r_0,r_1)=e$ for all $r_0,r_1\in R$, the connection $\DCp{e}$ is
  called the \jdef{trivial discrete connection}.

  Given $(q_0,q_1) = ((r_0,g_0),(r_1,g_1))\in \jgU$, it is easy to see
  that $(q_0,l^Q_{g^{-1}}(q_1)) \in Hor_{\DCp{C}}$ if and only if $g=g_1
  C(r_0,r_1) g_0^{-1}$. Therefore, the associated discrete connection
  form is
  \begin{equation}\label{eq:trivial_bundle-discrete_connection-C-1-form}
    \DCp{C}((r_0,g_0),(r_1,g_1)) = g_1 C(r_0,r_1) g_0^{-1}.
  \end{equation}

\end{example}

\begin{theorem}\label{thm:discrete_connection_vs_one_form}
  Let $\DC$ be a discrete connection on the principal $\SG$-bundle
  $\pi:Q\rightarrow Q/\SG$ with domain $\jgU$. Then, for all
  $(q_0,q_1)\in \jgU$ and $g_0,g_1\in \SG$,
  \begin{equation}
    \label{eq:Ad_GxG}
    \DC(l^Q_{g_0}(q_0),l^Q_{g_1}(q_1)) = g_1 \DC(q_0,q_1) g_0^{-1}.
  \end{equation}
  In addition, $Hor_\DC =\{(q_0,q_1)\in \jgU:
  \DC(q_0,q_1)=e\}$. Conversely, given a smooth function $\mathcal{A}:
  \mathcal{U}\rightarrow \SG$, where $\mathcal{U}\subset Q\times Q$ is
  an open subset that contains the diagonal $\Delta_Q\subset Q\times
  Q$ and is invariant under the product $\SG\times\SG$-action on
  $Q\times Q$, such that~\eqref{eq:Ad_GxG} holds (with $\DC$ replaced
  by $\mathcal{A}$) and $\mathcal{A}(q_0,q_0)=e$ for all $q_0\in Q$,
  then $Hor:=\{(q_0,q_1)\in \mathcal{U}: \mathcal{A}(q_0,q_1)=e\}$
  defines a discrete connection whose associated discrete connection
  form is $\mathcal{A}$.
\end{theorem}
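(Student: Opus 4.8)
Here is how I would approach the proof of Theorem~\ref{thm:discrete_connection_vs_one_form}.

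\textbf{Plan.} The statement has two directions: a ``forward'' part describing the equivariance and the zero-level-set characterization of $Hor_\DC$, and a ``converse'' part reconstructing a discrete connection from a function $\mathcal{A}$. For the forward direction I would work directly from the decomposition~\eqref{eq:pair_decomposition_Ad}. Fix $(q_0,q_1)\in\jgU$ and set $g:=\DC(q_0,q_1)$, so that $(q_0,l^Q_{g^{-1}}(q_1))\in Hor_\DC$. Apply $l^{Q\times Q}_{g_0}$... more precisely, I want to compute $\DC(l^Q_{g_0}(q_0),l^Q_{g_1}(q_1))$, so I would write down the candidate element $h:=g_1 g g_0^{-1}$ and verify that $(l^Q_{g_0}(q_0),l^Q_{h^{-1}}(l^Q_{g_1}(q_1)))$ lies in $Hor_\DC$; since $h^{-1}g_1 = g_0 g^{-1}$, this pair equals $l^{Q\times Q}_{g_0}(q_0,l^Q_{g^{-1}}(q_1))$, which is in $Hor_\DC$ by $l^{Q\times Q}$-invariance of $Hor_\DC$. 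Uniqueness of the element in~\eqref{eq:pair_decomposition_Ad} (Proposition~\ref{prop:CD_imp_pair_decomposition}) then forces $\DC(l^Q_{g_0}(q_0),l^Q_{g_1}(q_1))=h=g_1\DC(q_0,q_1)g_0^{-1}$, which is~\eqref{eq:Ad_GxG}. The identity $Hor_\DC=\{(q_0,q_1)\in\jgU:\DC(q_0,q_1)=e\}$ is then immediate: if $\DC(q_0,q_1)=e$ the decomposition reads $(q_0,q_1)=(q_0,q_0)\cdot(q_0,q_1)$ with the horizontal factor being $(q_0,q_1)$ itself, so $(q_0,q_1)\in Hor_\DC$; conversely if $(q_0,q_1)\in Hor_\DC$ then $g=e$ works in~\eqref{eq:pair_decomposition_Ad} and uniqueness gives $\DC(q_0,q_1)=e$.

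\textbf{Converse direction.} Here I am given the open, $\SG\times\SG$-invariant $\mathcal{U}\supset\Delta_Q$ and the smooth equivariant $\mathcal{A}:\mathcal{U}\to\SG$ with $\mathcal{A}|_{\Delta_Q}=e$, and I must show $Hor:=\mathcal{A}^{-1}(e)$ satisfies Definition~\ref{def:discrete_connection} and that its associated form is $\mathcal{A}$. First, $Hor$ contains $\Delta_Q$ by hypothesis, and $Hor$ is $l^{Q\times Q}$-invariant: if $\mathcal{A}(q_0,q_1)=e$ then by~\eqref{eq:Ad_GxG} (with $\mathcal{A}$), $\mathcal{A}(l^Q_g(q_0),l^Q_g(q_1)) = g\,e\,g^{-1}=e$. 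Next I need that $Hor$ is a submanifold and that $(id_Q\times\pi)|_{Hor}$ is an injective local diffeomorphism. The cleanest route is to produce an explicit inverse-type description: on $\mathcal{U}$ define $\Phi(q_0,q_1):=(q_0,l^Q_{\mathcal{A}(q_0,q_1)^{-1}}(q_1))$. This is smooth, and using the equivariance~\eqref{eq:Ad_GxG} one checks $\mathcal{A}(\Phi(q_0,q_1)) = \mathcal{A}(q_0,l^Q_{\mathcal{A}(q_0,q_1)^{-1}}(q_1)) = \mathcal{A}(q_0,q_1)^{-1}\mathcal{A}(q_0,q_1) = e$, so $\Phi$ maps $\mathcal{U}$ onto $Hor$ and is the identity on $Hor$; moreover $\Phi$ is $l^{Q\times Q_2}$-equivariant in an appropriate sense, and it descends through $id_Q\times\pi$. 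Concretely, I would argue that $(id_Q\times\pi)|_{\mathcal{U}}$ restricted to each ``$\kappa$-fiber direction'' is trivialized by $\Phi$: the map $\mathcal{U}\to Hor\times\SG$, $(q_0,q_1)\mapsto(\Phi(q_0,q_1),\mathcal{A}(q_0,q_1))$ is a diffeomorphism with inverse $((q_0,q_1'),g)\mapsto(q_0,l^Q_g(q_1'))$, exhibiting $Hor$ as a submanifold and showing $(id_Q\times\pi)|_{Hor}$ is a local diffeomorphism onto the open set $(id_Q\times\pi)(\mathcal{U})$. Injectivity of $(id_Q\times\pi)|_{Hor}$ follows from uniqueness in the $\SG$-action on fibers of $\pi$ together with $\mathcal{A}|_{Hor}=e$: if $(q_0,q_1),(q_0,q_1')\in Hor$ with $\pi(q_1)=\pi(q_1')$, then $q_1'=l^Q_g(q_1)$ for a unique $g$, and $e=\mathcal{A}(q_0,q_1')=\mathcal{A}(q_0,l^Q_g(q_1))=g\,\mathcal{A}(q_0,q_1)=g$, so $q_1'=q_1$. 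Finally, computing the associated discrete connection form of this $Hor$: given $(q_0,q_1)\in\jgU$, the element $g$ in~\eqref{eq:pair_decomposition_Ad} is characterized by $(q_0,l^Q_{g^{-1}}(q_1))\in Hor$, i.e.\ $\mathcal{A}(q_0,l^Q_{g^{-1}}(q_1)) = e$; by equivariance this is $g^{-1}\mathcal{A}(q_0,q_1)=e$, so $g=\mathcal{A}(q_0,q_1)$, i.e.\ the associated form is $\mathcal{A}$.

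\textbf{Main obstacle.} The forward direction and the algebraic bookkeeping are routine. The real work is in the converse: verifying carefully that $Hor=\mathcal{A}^{-1}(e)$ is a \emph{submanifold} and that $(id_Q\times\pi)|_{Hor}$ is a \emph{local} diffeomorphism, not merely a smooth bijection onto its image. The trivialization $\mathcal{U}\cong Hor\times\SG$ via $(q_0,q_1)\mapsto(\Phi(q_0,q_1),\mathcal{A}(q_0,q_1))$ is the key device; one must check it is genuinely a diffeomorphism (smoothness of both it and its stated inverse is clear, so this reduces to checking the two compositions are identities, using $\mathcal{A}\circ\Phi=e$ and equivariance), and then observe that under this identification $id_Q\times\pi$ becomes $(id_{Hor})\times\pi_{\mathrm{fib}}$ composed with the local diffeomorphism structure of the principal bundle on the $\SG$-factor, which makes the local-diffeomorphism claim transparent. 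I would also need the elementary fact that the domain $\jgU$ reconstructed from this $Hor$ is exactly $\mathcal{U}$: indeed $\jgU = l^{Q\times Q_2}_\SG(Hor)$, and the surjectivity of $\Phi:\mathcal{U}\to Hor$ together with $\SG\times\SG$-invariance of $\mathcal{U}$ gives $\jgU\subseteq\mathcal{U}$, while $\mathcal{U}\subseteq\jgU$ because any $(q_0,q_1)\in\mathcal{U}$ equals $l^Q_{\mathcal{A}(q_0,q_1)}$ applied to the second coordinate of $\Phi(q_0,q_1)\in Hor$.
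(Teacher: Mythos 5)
Your forward direction coincides with the paper's argument: form the candidate $h=g_1\DC(q_0,q_1)g_0^{-1}$, check that $(l^Q_{g_0}(q_0),l^Q_{h^{-1}}(l^Q_{g_1}(q_1)))=l^{Q\times Q}_{g_0}(q_0,l^Q_{\DC(q_0,q_1)^{-1}}(q_1))$ lies in $Hor_\DC$ by invariance, and conclude from the uniqueness of the group element in~\eqref{eq:pair_decomposition_Ad}; the characterization $Hor_\DC=\{(q_0,q_1)\in\jgU:\DC(q_0,q_1)=e\}$ is handled identically. Your converse also follows the paper's skeleton ($\Delta_Q\subset Hor$, invariance of $Hor$ from~\eqref{eq:Ad_GxG}, the computation $e=\mathcal{A}(q_0,l^Q_g(q_1))=g$ giving condition~(2) of Lemma~\ref{le:dconn_diffeo_vs_local_diffeo_and_intersection}, and the identification of the domain and of the associated form), but where the paper uses two separate analytic steps --- $e$ is a regular value of $\mathcal{A}$, so $Hor=\mathcal{A}^{-1}(\{e\})$ is a submanifold, and then local invertibility of $(id_Q\times\pi)|_{Hor}$ via local sections $\Phi_\sigma(q_0,r_1)=(q_0,l^Q_{\mathcal{A}(q_0,\sigma(r_1))^{-1}}(\sigma(r_1)))$ --- you propose a single global trivialization $\mathcal{U}\cong Hor\times\SG$ built from $\Phi(q_0,q_1)=(q_0,l^Q_{\mathcal{A}(q_0,q_1)^{-1}}(q_1))$.

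The one genuine gap is the claim that this trivialization ``exhibits $Hor$ as a submanifold.'' As written it is circular: to say that $(q_0,q_1)\mapsto(\Phi(q_0,q_1),\mathcal{A}(q_0,q_1))$ is a diffeomorphism onto $Hor\times\SG$ --- or even that its inverse is smooth --- you must already have a smooth structure on $Hor$, which is precisely what is to be established. What you actually get for free is a smooth injection $\mathcal{U}\to(Q\times Q)\times\SG$ admitting a left inverse that extends to a smooth map of the ambient spaces, hence an embedding whose image is the \emph{set} $Hor\times\SG$; but passing from this to the statement that the slice at $g=e$, and hence $Hor$ itself, is a submanifold of $Q\times Q$ still requires a rank or transversality argument. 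The cheapest repair is exactly the paper's one-line observation: differentiating $\mathcal{A}(q_0,l^Q_{\exp(t\xi)}(q_1))=\exp(t\xi)\,\mathcal{A}(q_0,q_1)$ in $t$ shows $d\mathcal{A}$ is surjective at every point of $\mathcal{U}$, so $e$ is a regular value and $Hor$ is a submanifold of codimension $\dim\SG$. With that inserted, the remainder of your argument (injectivity of $(id_Q\times\pi)|_{Hor}$, the local-diffeomorphism claim via the factorization $(id_Q\times\pi)|_{\mathcal{U}}=(id_Q\times\pi)|_{Hor}\circ\Phi$ together with the dimension count, and the verifications that $\jgU=\mathcal{U}$ and that the associated form is $\mathcal{A}$) goes through and is equivalent to the paper's.
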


\begin{proof}
  For $(q_0,q_1)\in\jgU$, let $h:=\DC(q_0,q_1)$. By the
  $\SG\times\SG$-invariance of $\jgU$, we have that
  $(l^Q_{g_0}(q_0),l^Q_{g_1}(q_1)) \in \jgU$ for any $g_0,g_1\in\SG$;
  let $\ti{h}:=\DC(l^Q_{g_0}(q_0),l^Q_{g_1}(q_1))$. By definition,
  $(q_0,l^Q_{h^{-1}}(q_1))\in Hor_\DC$ and by the $\SG$-invariance of
  $Hor_\DC$, we see that $(l^Q_{g_0}(q_0), l^Q_{(g_1 h
    g_0^{-1})^{-1}}(l^Q_{g_1}(q_1)))\in Hor_\DC$. On the other hand,
  also by definition, $(l^Q_{g_0}(q_0),
  l^Q_{\ti{h}^{-1}}(l^Q_{g_1}(q_1))) \in Hor_\DC$. Using that
  $(id_Q\times \pi)|_{Hor_\DC}$ is one to one, we conclude that $g_1 h
  g_0^{-1} = \ti{h}$, proving that~\eqref{eq:Ad_GxG} holds.

  By Proposition~\ref{prop:CD_imp_pair_decomposition}, the element $g$
  appearing in~\eqref{eq:pair_decomposition_Ad} is unique, hence $g=e$
  characterizes the horizontality of $(q_0,q_1)\in \jgU$. This proves
  that $Hor_\DC =\{(q_0,q_1)\in \jgU: \DC(q_0,q_1)=e\}$.

  Conversely, given $\mathcal{U}$, $\mathcal{A}$ and $Hor$ as in the
  statement, we show that $Hor$ defines a discrete connection.  Since
  $\mathcal{A}(q_0,q_0)=e$ for all $q_0\in Q$, we have that
  $\Delta_Q\subset Hor$. It is easy to check explicitly that
  $d\mathcal{A}:T_{(q_0,q_1)}\mathcal{U} \rightarrow T_e\SG =\jgsg$ is
  onto for all $(q_0,q_1)\in Hor$. Hence, $e$ is a regular value of
  $\mathcal{A}$ and $Hor :=\mathcal{A}^{-1}(\{e\})\subset Q\times Q$
  is a submanifold. The $l^{Q\times Q}$-invariance of $Hor$ follows
  readily using~\eqref{eq:Ad_GxG}.

  Assume now that $(q_0,l^Q_g(q_1))\in Hor$ with $(q_0,q_1)\in
  Hor$. Then $e= \mathcal{A}(q_0,l^q_g(q_1)) = g \mathcal{A}(q_0,q_1)
  = g e = g$, showing that
  condition~\ref{it:dconn_diffeo_vs_local_diffeo_and_intersection-inter}
  in Lemma~\ref{le:dconn_diffeo_vs_local_diffeo_and_intersection}
  holds. That $(id_Q\times \pi)|_{Hor}$ is a local diffeomorphism is
  checked locally. Let $U\subset Q/\SG$ be an open subset trivializing
  $\pi$ and $\sigma\in \Gamma(U,Q|_U)$, a local section over $U$ of
  the principal $\SG$-bundle $\pi:Q\rightarrow Q/\SG$. Define
  $\Phi_\sigma: Q\times U\rightarrow Q\times (Q|_U)$ by
  $\Phi_\sigma(q_0,r_1) :=
  (q_0,l^Q_{\mathcal{A}(q_0,\sigma(r_1))^{-1}}(\sigma(r_1)))$. It can
  be checked that $\Phi_\sigma$ is a smooth map whose image is
  contained in $Hor$. Furthermore, $\Phi_\sigma$ is the inverse of
  $(id_Q\times \pi)|_{Hor \cap ((Q|_U)\times (Q|_U))}$, showing that
  condition~\ref{it:dconn_diffeo_vs_local_diffeo_and_intersection-ldiff}
  in Lemma~\ref{le:dconn_diffeo_vs_local_diffeo_and_intersection}
  holds. By
  Lemma~\ref{le:dconn_diffeo_vs_local_diffeo_and_intersection},
  $(id_Q\times \pi)|_{Hor}$ is an injective local diffeomorphism and
  we conclude that $Hor$ defines a discrete connection $\DC$ on
  $\pi:Q\rightarrow Q/\SG$. Direct evaluation shows that the domain of
  $\DC$ is $\jgU = \mathcal{U}$ and that the discrete connection form
  associated to $\DC$ is $\mathcal{A}$.
\end{proof}

Motivated by the previous analysis we introduce the following concept.

\begin{definition}\label{def:discrete_connection_form}
  Let $\pi:Q\rightarrow Q/\SG$ be a principal $\SG$-bundle and
  $\mathcal{U}\subset Q\times Q$ be an open subset that contains the
  diagonal $\Delta_Q\subset Q\times Q$ and is invariant under the
  product $\SG\times\SG$-action on $Q\times Q$. A smooth function
  $\mathcal{A}:\mathcal{U}\rightarrow \SG$ is called a \jdef{discrete
    connection form} if $\mathcal{A}(q_0,q_0)=e$ for all $q_0\in Q$
  and it satisfies
  \begin{equation*}
    \mathcal{A}(l^Q_{g_0}(q_0),l^Q_{g_1}(q_1)) = g_1 \mathcal{A}(q_0,q_1) g_0^{-1}
    \stext{ for all } (q_0,q_1)\in \jgU,\quad g_0,g_1\in \SG.
  \end{equation*}
\end{definition}

Using the new notion and taking Lemma~\ref{le:CD_is_smooth} into account,
we rewrite Theorem~\ref{thm:discrete_connection_vs_one_form} as
follows.

\begin{theorem}\label{thm:discrete_connection_vs_one_form-equivalence}
  Let $\DC$ be a discrete connection on the principal $\SG$-bundle
  $\pi:Q\rightarrow Q/\SG$ with domain $\jgU$. Then, its associated
  discrete connection form $\DC$ is a discrete connection
  form. Conversely, given a discrete connection form $\mathcal{A}:
  \mathcal{U}\rightarrow \SG$ on the same principal bundle $\pi$, with
  $\mathcal{U}$ as in Definition~\ref{def:discrete_connection_form},
  the subset $Hor:=\{(q_0,q_1)\in \mathcal{U}:
  \mathcal{A}(q_0,q_1)=e\} \subset Q\times Q$ defines a discrete
  connection on $\pi$ whose associated discrete connection form is
  $\mathcal{A}$.
\end{theorem}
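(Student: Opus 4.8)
The plan is to observe that Theorem~\ref{thm:discrete_connection_vs_one_form-equivalence} is essentially a restatement of Theorem~\ref{thm:discrete_connection_vs_one_form} in the language of Definition~\ref{def:discrete_connection_form}, so the proof is a matter of matching hypotheses and conclusions rather than doing new work. First I would check the forward direction: given a discrete connection $\DC$ with domain $\jgU$, Lemma~\ref{le:CD_is_smooth} tells us the associated discrete connection form $\DC:\jgU\to\SG$ is smooth, $\jgU$ is an open, $\SG\times\SG$-invariant subset containing $\Delta_Q$ by parts~\ref{it:dconn_basic_prop-open_sets} and~\ref{it:dconn_basic_prop-GxG_action} of Proposition~\ref{prop:dconn_basic_prop} together with the fact that $\Delta_Q\subset Hor_\DC\subset\jgU$, and the defining identities $\DC(q_0,q_0)=e$ and~\eqref{eq:Ad_GxG} are exactly the content of Theorem~\ref{thm:discrete_connection_vs_one_form} (the first because $(q_0,q_0)\in Hor_\DC$ and Proposition~\ref{prop:CD_imp_pair_decomposition} forces $g=e$). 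Hence $\DC$ satisfies all the clauses of Definition~\ref{def:discrete_connection_form}.

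For the converse I would simply invoke the second half of Theorem~\ref{thm:discrete_connection_vs_one_form}: a discrete connection form $\mathcal{A}:\mathcal{U}\to\SG$ in the sense of Definition~\ref{def:discrete_connection_form} is precisely a smooth map on an open, $\SG\times\SG$-invariant $\mathcal{U}\supset\Delta_Q$ satisfying~\eqref{eq:Ad_GxG} (with $\DC$ replaced by $\mathcal{A}$) and $\mathcal{A}(q_0,q_0)=e$, which is exactly the hypothesis of the converse part of Theorem~\ref{thm:discrete_connection_vs_one_form}. That theorem then yields that $Hor:=\{(q_0,q_1)\in\mathcal{U}:\mathcal{A}(q_0,q_1)=e\}$ defines a discrete connection with associated discrete connection form $\mathcal{A}$, which is the assertion to be proved.

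The only point requiring a word of care —and the closest thing to an obstacle— is verifying that the class of maps called ``discrete connection form'' in Definition~\ref{def:discrete_connection_form} coincides verbatim with the class of maps appearing as hypotheses in the converse of Theorem~\ref{thm:discrete_connection_vs_one_form}; this is immediate from comparing the two statements but should be stated explicitly so the logical chain is closed. With that observation in hand the proof is just two sentences: one citing Lemma~\ref{le:CD_is_smooth} and Theorem~\ref{thm:discrete_connection_vs_one_form} for the direct implication, and one citing Theorem~\ref{thm:discrete_connection_vs_one_form} for the converse.
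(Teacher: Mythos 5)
Your proposal is correct and matches the paper exactly: the paper offers no separate proof of Theorem~\ref{thm:discrete_connection_vs_one_form-equivalence}, presenting it precisely as a restatement of Theorem~\ref{thm:discrete_connection_vs_one_form} once Lemma~\ref{le:CD_is_smooth} (and the properties of $\jgU$ from Proposition~\ref{prop:dconn_basic_prop}) are taken into account. Your extra care in checking that Definition~\ref{def:discrete_connection_form} coincides verbatim with the hypotheses of the converse of Theorem~\ref{thm:discrete_connection_vs_one_form} is exactly the right (and only) thing to verify.
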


\begin{remark}
  Theorem~\ref{thm:discrete_connection_vs_one_form-equivalence}
  establishes a correspondence between discrete connections and
  discrete connection forms. The assignments $\mathcal{A}\mapsto
  Hor_\mathcal{A}:=\mathcal{A}^{-1}(\{e\})$ and $Hor_\DC \mapsto \DC$
  given by~\eqref{eq:DC_associated_to_DC} are the corresponding
  opposite operations. This correspondence justifies using the name
  $\DC$ for both the discrete connection and the discrete connection
  form.
\end{remark}

\begin{remark}
  The situation described in
  Example~\ref{ex:trivial_bundle-discrete_connection-C} corresponds to
  the general discrete connection on the (trivial) principal
  $\SG$-bundle $p_1:R\times \SG\rightarrow R$. Indeed, if $\DC$ is
  such a connection with domain $\jgU$, using~\eqref{eq:Ad_GxG}, we
  have that $\DC((r_0,g_0),(r_1,g_1)) = g_1 \DC((r_0,e),(r_1,e))
  g_0^{-1}$.
  Define $C(r_0,r_1):= \DC((r_0,e),(r_1,e))$ for all $(r_0,r_1)\in
  \jgU'' = (\pi\times \pi)(\jgU)$, so that
  \begin{equation*}
    Hor_\DC = \DC^{-1}(\{e\}) =
    \{((r_0,g_0),(r_1,g_1))\in \jgU : g_1 C(r_0,r_1) = g_0\}.
  \end{equation*}
  Comparison of this last expression
  with~\eqref{eq:trivial_bundle-discrete_connection-C-hor} shows that
  the discrete connection form satisfies $\DC = \DCp{C}$.

  As all principal $\SG$-bundles are locally of the form $p_1:R\times
  \SG\rightarrow R$,
  Example~\ref{ex:trivial_bundle-discrete_connection-C} provides a
  local description of arbitrary discrete connections on principal
  $\SG$-bundles.
\end{remark}


\section{Discrete horizontal lift}
\label{sec:horizontal_lift}

As in the case of connections on principal bundles, discrete
connections establish local diffeomorphisms between slices of the
horizontal submanifold and the base space of the bundle. The inverse
operation is the \jdef{discrete horizontal lift}.

\begin{definition}
  Let $\DC$ be a discrete connection with domain $\jgU$ on the
  principal $\SG$-bundle $\pi:Q\rightarrow Q/\SG$ ; let
  $\jgU'\subset Q\times (Q/\SG)$ be the open set defined in
  part~\ref{it:dconn_basic_prop-open_sets} of
  Proposition~\ref{prop:dconn_basic_prop}. The \jdef{associated
    discrete horizontal lift} $\HLd{}:\jgU'\rightarrow Q\times Q$ is
  the inverse map of the diffeomorphism $(id_Q\times
  \pi)|_{Hor_\DC}:Hor_\DC\rightarrow \jgU'$. Explicitly
  \begin{equation}\label{eq:HLd-def}
    \HLd{}(q_0,r_1) \:= (q_0,q_1) \quad \iff \quad (q_0,q_1)\in
    Hor_{\DC} \ \text{ and }\  \pi(q_1)=r_1.
  \end{equation}
  It is convenient to write $\HLd{q_0}(r_1):= \HLd{}(q_0,r_1)$ and
  define $\HLds{q_0} :=p_2\circ
  \HLd{q_0}$.
\end{definition}

\begin{example}\label{ex:trivial_bundle-horizontal_lift-C}
  The horizontal lift associated to the connections $\DCp{C}$
  introduced in Example~\ref{ex:trivial_bundle-discrete_connection-C} is
  given by $\HLd{}:(R\times\SG)\times R \rightarrow (R\times
  \SG)\times (R\times \SG)$ with
  \begin{equation*}
    \HLd{(r_0,g_0)}(r_1) = ((r_0,g_0),(r_1,g_0 C(r_0,r_1)^{-1})).
  \end{equation*}
\end{example}

\begin{remark}
  When $(q_0,q_1)\in\jgU$, if $g:=\DC(q_0,q_1)$, we know that
  $(q_0,l^Q_{g^{-1}}(q_1)) \in  Hor_\DC$. Then, $\HLd{q_0}(\pi(q_1)) =
  \HLd{q_0}(\pi(l^Q_{g^{-1}}(q_1))) = (q_0,l^Q_{g^{-1}}(q_1))$ and
  $\HLds{q_0}(\pi(q_1)) = l^Q_{g^{-1}}(q_1)$, so that
  $l^Q_g(\HLds{q_0}(\pi(q_1))) = q_1$. Hence,
  \begin{equation}
    \label{eq:relation_between_CD_and_HL}
    \DC(q_0,q_1) = \kappa(\HLds{q_0}(\pi(q_1)),q_1), \stext{ for all } 
    (q_0,q_1)\in \jgU.
  \end{equation}
\end{remark}

The following result establishes the basic properties of the discrete
horizontal lift associated to a discrete connection. It also proves
that a discrete connection can be reconstructed given its associated
horizontal lift.

\begin{theorem}\label{thm:horizontal_lift_properties_and_characterization}
  Let $\DC$ be a discrete connection on the principal $\SG$-bundle
  $\pi:Q\rightarrow Q/\SG$ with domain $\jgU$. Then the following
  assertions are true.
  \begin{enumerate}
  \item \label{it:HL_properties-U'_is_invariant} $\jgU'\subset Q\times
    (Q/\SG)$ is $\SG$-invariant for the $\SG$-action defined by
    $l^{Q\times (Q/\SG)}_g(q_0,r_1) := (l^Q_g(q_0),r_1)$ for all
    $g\in\SG$.
  \item \label{it:HL_properties-HL_is_smooth} $\HLd{}:\jgU'\rightarrow
    Q\times Q$ is smooth and $\SG$-equivariant for the $\SG$-actions
    $l^{Q\times(Q/\SG)}$ and $l^{Q\times Q}$.
  \item \label{it:HL_properties-HL_is_a_section} $\HLd{}$ is a section
    over $\jgU'$ of $(id_Q\times \pi):Q\times Q\rightarrow Q\times
    (Q/\SG)$; that is, $(id_Q\times \pi) \circ \HLd{} = id_{\jgU'}$.
  \item \label{it:HL_properties-HL_normalization} For every $q_0\in
    Q$, $(q_0,\pi(q_0)) \in \jgU'$ and $\HLd{q_0}(\pi(q_0)) =
    (q_0,q_0)$.
  \end{enumerate}
  Conversely, assume that $\mathcal{U}'\subset Q\times (Q/\SG)$ is an
  open set that satisfies
  condition~\ref{it:HL_properties-U'_is_invariant} (with $\jgU'$
  replaced by $\mathcal{U}'$) and $h:\mathcal{U}'\rightarrow Q\times
  Q$ is a map such that
  conditions~\ref{it:HL_properties-HL_is_smooth},
  \ref{it:HL_properties-HL_is_a_section}
  and~\ref{it:HL_properties-HL_normalization} are satisfied (with
  $\jgU'$ and $\HLd{}$ replaced by $\mathcal{U}'$ and $h$). Then,
  there exists a unique discrete connection $\DC$ with domain $\jgU =
  (id_Q\times \pi)^{-1}(\mathcal{U}')$ on $\pi:Q\rightarrow Q/\SG$
  such that $\mathcal{U}'=\jgU'$ and $h=\HLd{}$.
\end{theorem}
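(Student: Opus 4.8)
The plan is to establish the forward implications first, then build the inverse construction and verify its properties. The forward direction (items \ref{it:HL_properties-U'_is_invariant}--\ref{it:HL_properties-HL_normalization}) is largely immediate from the definition of $\HLd{}$ as the inverse of the diffeomorphism $(id_Q\times \pi)|_{Hor_\DC}: Hor_\DC \rightarrow \jgU'$. Item~\ref{it:HL_properties-HL_is_a_section} is tautological from this. For item~\ref{it:HL_properties-U'_is_invariant}, note $\jgU' = (id_Q\times \pi)(Hor_\DC)$ and use the $l^{Q\times Q}$-invariance of $Hor_\DC$ together with the intertwining $(id_Q\times \pi)\circ l^{Q\times Q}_g = l^{Q\times (Q/\SG)}_g \circ (id_Q\times \pi)$. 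Item~\ref{it:HL_properties-HL_is_smooth}: smoothness is immediate since $(id_Q\times\pi)|_{Hor_\DC}$ is a diffeomorphism; $\SG$-equivariance follows from the same intertwining relation applied to the inverse map. For item~\ref{it:HL_properties-HL_normalization}, $\Delta_Q\subset Hor_\DC$ gives $(q_0,q_0)\in Hor_\DC$, hence $(q_0,\pi(q_0))\in\jgU'$, and by~\eqref{eq:HLd-def} the unique point of $Hor_\DC$ lying over $(q_0,\pi(q_0))$ is $(q_0,q_0)$, so $\HLd{q_0}(\pi(q_0)) = (q_0,q_0)$.

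For the converse, given $\mathcal{U}'$ and $h$ as in the statement, I would set $Hor := h(\mathcal{U}')\subset Q\times Q$ and show it defines a discrete connection. First, $Hor$ contains $\Delta_Q$: by the analogue of~\ref{it:HL_properties-HL_normalization}, $h(q_0,\pi(q_0)) = (q_0,q_0)$ for every $q_0$. Next, $l^{Q\times Q}$-invariance of $Hor$ follows from the $\SG$-equivariance of $h$ and the $\SG$-invariance of $\mathcal{U}'$. The key point is that $Hor$ is a submanifold on which $(id_Q\times\pi)$ restricts to an injective local diffeomorphism: since $h$ is a section of $(id_Q\times\pi)$ over $\mathcal{U}'$ (condition~\ref{it:HL_properties-HL_is_a_section}), the composite $(id_Q\times\pi)\circ h = id_{\mathcal{U}'}$ shows $(id_Q\times\pi)|_{h(\mathcal{U}')}$ is left-inverse to $h$ viewed as a map $\mathcal{U}'\rightarrow Hor$; conversely $h\circ (id_Q\times\pi)|_{Hor}$ is the identity on $Hor$ provided $h$ is injective, which it is because it has the left inverse $(id_Q\times\pi)$. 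Thus $h:\mathcal{U}'\rightarrow Hor$ is a bijection with smooth inverse $(id_Q\times\pi)|_{Hor}$; to upgrade this to "$Hor$ is a submanifold and $h$ a diffeomorphism onto it" I would work locally using a trivializing section $\sigma\in\Gamma(U,Q|_U)$ as in the proof of Theorem~\ref{thm:discrete_connection_vs_one_form}, expressing $h$ in coordinates and checking its image is an embedded submanifold.

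The main obstacle, and the step requiring the most care, is verifying that $Hor = h(\mathcal{U}')$ is genuinely an \emph{embedded} submanifold (not merely an injective immersed image) and that $(id_Q\times\pi)|_{Hor}$ is a local diffeomorphism onto $\mathcal{U}'$ — smoothness of $h$ alone does not force its image to be embedded. I expect to resolve this exactly as in Theorem~\ref{thm:discrete_connection_vs_one_form}: over a trivializing open $U\subset Q/\SG$ with section $\sigma$, parametrize $Hor\cap((Q|_U)\times(Q|_U))$ by $(q_0,r_1)\mapsto h(q_0,r_1)$ and observe that $p_1\circ (id_Q\times\pi)\circ h = p_1$ (the first factor is untouched) and $\pi\circ p_2\circ h = p_2$, so in the chart $h$ has the form $(q_0,r_1)\mapsto (q_0,\text{smooth function})$, manifestly an embedding, with $(id_Q\times\pi)|_{Hor}$ as its inverse. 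This establishes both conditions of Lemma~\ref{le:dconn_diffeo_vs_local_diffeo_and_intersection} (injectivity being global from the left-inverse argument, the local diffeomorphism property being the chart computation just described), so $Hor$ defines a discrete connection $\DC$. Finally, uniqueness follows because any discrete connection with $\jgU' = \mathcal{U}'$ has its horizontal submanifold equal to $\HLd{}(\jgU') = h(\mathcal{U}') = Hor$, and the domain is then forced to be $\jgU = (id_Q\times\pi)^{-1}(\mathcal{U}')$; a direct check shows $h = \HLd{}$ and $\mathcal{U}' = \jgU'$ for this $\DC$.
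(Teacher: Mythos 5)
Your forward direction matches the paper's almost verbatim, but your converse takes a genuinely different route. The paper does not construct $Hor$ directly: it defines the candidate connection form $\DC(q_0,q_1):=\kappa\bigl(p_2(h(q_0,\pi(q_1))),q_1\bigr)$ on $\jgU:=(id_Q\times\pi)^{-1}(\mathcal{U}')$, checks smoothness, the equivariance identity~\eqref{eq:Ad_GxG} and the normalization $\DC(q_0,q_0)=e$, and then invokes Theorem~\ref{thm:discrete_connection_vs_one_form} to produce the discrete connection, afterwards verifying $h=\HLd{}$ via~\eqref{eq:relation_between_CD_and_HL}. You instead set $Hor:=h(\mathcal{U}')$ and verify Definition~\ref{def:discrete_connection} from scratch. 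Your route is sound: the one point you flag as delicate --- that $h(\mathcal{U}')$ is embedded rather than merely an injectively immersed image --- is in fact automatic, since $h$ is a smooth section of the submersion $id_Q\times\pi$ over the open set $\mathcal{U}'$, so it is an injective immersion whose continuous inverse is $(id_Q\times\pi)|_{h(\mathcal{U}')}$, hence an embedding; your chart computation makes the same point concretely (in a trivialization $Q|_U\cong U\times\SG$ the image is a graph), though "of the form $(q_0,r_1)\mapsto(q_0,\text{smooth})$" is not by itself what makes it an embedding --- what does is that $id_Q\times\pi$ recovers $(q_0,r_1)$ from the image point. The trade-off: the paper's detour through the connection form reuses already-proved machinery and delivers the formula relating $\DC$ to $h$ as a by-product, while your argument is more self-contained and makes the geometric content (the horizontal submanifold is literally the image of the lift) transparent. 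Both yield the same uniqueness argument, since $Hor_\DC=\HLd{}(\jgU')$ is forced.
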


\begin{proof}
  In general, $id_Q\times \pi$ is $\SG$-equivariant for the actions
  $l^{Q\times Q}$ and $l^{Q\times (Q/\SG)}$. When $\DC$ is a discrete
  connection, $Hor_{\DC}$ is $\SG$-invariant, so that $(id_Q\times
  \pi)|_{Hor_\DC}$ is a $\SG$-equivariant diffeomorphism and,
  consequently, its image $\jgU'$ is $\SG$-invariant and its inverse
  $\HLd{}$ is smooth and $\SG$-equivariant. This proves
  points~\ref{it:HL_properties-U'_is_invariant}
  and~\ref{it:HL_properties-HL_is_smooth}.
  Point~\ref{it:HL_properties-HL_is_a_section} follows immediately
  from the definition of $\HLd{}$, while
  point~\ref{it:HL_properties-HL_normalization} is a consequence of
  $\Delta_Q\subset Hor_\DC$.

  Now assume that $\mathcal{U}'$ and $h$ are as in the statement. Let
  $\jgU:=(id_Q\times \pi)^{-1}(\mathcal{U}')$, that is open due to the
  openness of $\mathcal{U}'$ and the smoothness of $id_Q\times
  \pi$. Furthermore, it follows from the $\SG$-invariance of
  $\mathcal{U}'$ that $\jgU$ is $\SG\times \SG$-invariant and, by the
  first part of condition~\ref{it:HL_properties-HL_normalization}, it
  contains the diagonal $\Delta_Q\subset Q\times Q$.  Motivated
  by~\eqref{eq:relation_between_CD_and_HL}, define
  $\DC:\jgU\rightarrow \SG$ by $\DC(q_0,q_1) :=
  \kappa(p_2(h(q_0,\pi(q_1))),q_1)$. Being a composition of smooth
  functions, $\DC$ is smooth. Straightforward computations show that
  $\DC$ satisfies condition~\eqref{eq:Ad_GxG} and $\DC(q_0,q_0)=e$ for
  all $q_0\in Q$. All together, by
  Theorem~\ref{thm:discrete_connection_vs_one_form}, $\DC$ defines a
  discrete connection with domain $\jgU$ on $\pi:Q\rightarrow
  Q/\SG$. It is immediate that $\jgU' = (id_Q\times \pi)(\jgU) =
  \mathcal{U}'$ and, using~\eqref{eq:relation_between_CD_and_HL} as
  well as the definition of $\DC$, we conclude that $h = \HLd{}$,
  proving the last part of the Theorem. The uniqueness assertion
  follows from the fact that the discrete connection form is
  determined by the horizontal lift
  using~\eqref{eq:relation_between_CD_and_HL}.
\end{proof}

Motivated by the previous analysis, it is convenient to introduce the
following notion.

\begin{definition}\label{def:HLd}
  Let $\SG$ be a Lie group acting on $Q$ by $l^Q$ in such a way that
  $\pi:Q\rightarrow Q/\SG$ is a principal $\SG$-bundle and let
  $\mathcal{U}'\subset Q\times (Q/\SG)$ be an open subset. A smooth
  function $h:\mathcal{U}'\rightarrow Q\times Q$ is a \jdef{discrete
    horizontal lift} on $\pi$ if the following conditions hold.
  \begin{enumerate}
  \item \label{it:HL_properties-U'_is_invariant} $\mathcal{U}'\subset
    Q\times (Q/\SG)$ is $\SG$-invariant for the $\SG$-action
    $l^{Q\times (Q/\SG)}$.
  \item \label{it:HL_properties-HL_is_smooth} $h:\mathcal{U}'\rightarrow
    Q\times Q$ is smooth and $\SG$-equivariant for the $\SG$-actions
    $l^{Q\times(Q/\SG)}$ and $l^{Q\times Q}$.
  \item \label{it:HL_properties-HL_is_a_section} $h$ is a section of
    $(id_Q\times \pi):Q\times Q\rightarrow Q\times (Q/\SG)$ over
    $\mathcal{U}'$; that is, $(id_Q\times \pi) \circ h =
    id_{\mathcal{U}'}$.
  \item \label{it:HL_properties-HL_normalization} For every $q_0\in
    Q$, $(q_0,\pi(q_0)) \in \mathcal{U}'$ and $h(q_0,\pi(q_0)) =
    (q_0,q_0)$.
  \end{enumerate}
\end{definition}

We can rewrite
Theorem~\ref{thm:horizontal_lift_properties_and_characterization}
using the new concept as follows.

\begin{theorem}
  \label{thm:horizontal_lift_properties_and_characterization-rewrite}
  Let $\DC$ be a discrete connection on the principal $\SG$-bundle
  $\pi:Q\rightarrow Q/\SG$ with domain $\jgU$. Then
  $\HLd{}:\jgU'\rightarrow Q\times Q$ as defined by~\eqref{eq:HLd-def}
  is a discrete horizontal lift on $\pi$.  Conversely, if
  $h:\mathcal{U}'\rightarrow Q\times Q$ is a discrete horizontal lift
  on $\pi:Q\rightarrow Q/\SG$, there exists a unique discrete
  connection $\DC$ with domain $\jgU = (id_Q\times
  \pi)^{-1}(\mathcal{U}')$ on $\pi$ such that $\mathcal{U}'=\jgU'$ and
  $h=\HLd{}$.
\end{theorem}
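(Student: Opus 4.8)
The plan is to reduce Theorem~\ref{thm:horizontal_lift_properties_and_characterization-rewrite} entirely to the previously established Theorem~\ref{thm:horizontal_lift_properties_and_characterization}, since the only difference is that Definition~\ref{def:HLd} has abstracted the four conditions \ref{it:HL_properties-U'_is_invariant}--\ref{it:HL_properties-HL_normalization} into the self-contained notion of \emph{discrete horizontal lift}. So the proof amounts to observing that the hypotheses and conclusions of the two theorems are literally the same statements, just with different names attached.

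\smallskip

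For the first (direct) implication, I would argue as follows. Let $\DC$ be a discrete connection on $\pi$ with domain $\jgU$. By parts~\ref{it:HL_properties-U'_is_invariant}, \ref{it:HL_properties-HL_is_smooth}, \ref{it:HL_properties-HL_is_a_section} and~\ref{it:HL_properties-HL_normalization} of Theorem~\ref{thm:horizontal_lift_properties_and_characterization}, the set $\jgU'\subset Q\times(Q/\SG)$ is $\SG$-invariant for $l^{Q\times(Q/\SG)}$ and the map $\HLd{}:\jgU'\rightarrow Q\times Q$ defined by~\eqref{eq:HLd-def} is smooth and $\SG$-equivariant, is a section of $id_Q\times\pi$ over $\jgU'$, and satisfies $(q_0,\pi(q_0))\in\jgU'$ together with $\HLd{q_0}(\pi(q_0))=(q_0,q_0)$ for all $q_0\in Q$. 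These are precisely conditions~\ref{it:HL_properties-U'_is_invariant}--\ref{it:HL_properties-HL_normalization} of Definition~\ref{def:HLd} (with $\mathcal{U}'$ and $h$ taken to be $\jgU'$ and $\HLd{}$), so $\HLd{}$ is a discrete horizontal lift on $\pi$.

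\smallskip

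For the converse, suppose $h:\mathcal{U}'\rightarrow Q\times Q$ is a discrete horizontal lift on $\pi$ in the sense of Definition~\ref{def:HLd}. Then, by definition, $\mathcal{U}'$ and $h$ satisfy conditions~\ref{it:HL_properties-U'_is_invariant}--\ref{it:HL_properties-HL_normalization} exactly as required in the hypotheses of the converse part of Theorem~\ref{thm:horizontal_lift_properties_and_characterization}. Applying that theorem yields a unique discrete connection $\DC$ with domain $\jgU=(id_Q\times\pi)^{-1}(\mathcal{U}')$ on $\pi$ such that $\mathcal{U}'=\jgU'$ and $h=\HLd{}$, which is exactly the assertion to be proved.

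\smallskip

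There is essentially no obstacle here: the theorem is a reformulation, and the work has already been done in Theorem~\ref{thm:horizontal_lift_properties_and_characterization}. The only thing worth being careful about is confirming that the numbered conditions in Definition~\ref{def:HLd} match, item for item, the ones appearing in Theorem~\ref{thm:horizontal_lift_properties_and_characterization} (including the detail that condition~\ref{it:HL_properties-HL_normalization} packages both the membership $(q_0,\pi(q_0))\in\mathcal{U}'$ and the normalization $h(q_0,\pi(q_0))=(q_0,q_0)$, the former being what guarantees $\Delta_Q\subset\jgU$), so that the translation between the two formulations is truly verbatim.
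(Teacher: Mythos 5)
Your proposal is correct and matches the paper exactly: the paper introduces this theorem explicitly as a rewrite of Theorem~\ref{thm:horizontal_lift_properties_and_characterization} using Definition~\ref{def:HLd}, and gives no separate argument beyond the item-for-item translation you spell out. Nothing is missing.
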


\begin{remark}
  We see from
  Theorem~\ref{thm:horizontal_lift_properties_and_characterization-rewrite}
  that mapping $\DC\mapsto \HLd{}$ and $h\mapsto \mathcal{A}$ with
  $\mathcal{A}(q_0,q_1) := \kappa(p_2(h(q_0,\pi(q_1))),q_1)$
  establishes a bijection between discrete horizontal lifts and
  discrete connections forms ---or discrete connections--- on a
  principal bundle.
\end{remark}

\begin{remark}\label{rem:existence_nonexistence_of_discrete_connections}
  Given a discrete connection $\DC$ on the principal bundle
  $\pi:Q\rightarrow Q/\SG$, its discrete connection form $\DC$ and
  discrete horizontal lift $\HLd{}$ may not be defined
  everywhere. Indeed, if $\HLd{}:Q\times (Q/\SG)\rightarrow Q\times Q$
  (that is, $\jgU'=Q\times (Q/\SG)$), then for any $q\in Q$, the map
  $r\mapsto \HLds{q}(r)$ is a global section of the principal bundle
  $\pi$, so that the bundle is trivial. Hence, for nontrivial
  principal $\SG$-bundles $\HLd{}$ and, consequently, $\DC$ can only
  be defined in some open set of the total space.
\end{remark}

\begin{remark}
  The equations of motion of a $\SG$-symmetric mechanical system on
  $Q$ can be written using a connection $\CC$ on the principal
  $\SG$-bundle $\pi : Q \rightarrow Q / \SG$. In
  \cite{ar:cendra_marsden_ratiu-geometric_mechanics_lagrangian_reduction_and_nonholonomic_systems},
  Cendra, Marsden and Ratiu use the same type of connection to
  construct an isomorphic model for the reduced space $TQ/ \SG$. Given
  a connection $\CC$, they define $\alpha_\CC : TQ/ \SG \rightarrow
  T(Q/\SG) \oplus \ti{\jgsg}$, an isomorphism of vector bundles over
  $Q/G$, by
  \begin{equation*}
    \alpha_\CC([q,\dot{q}]_{\SG}) := \big(
    d\pi(q,\dot{q}), [(q, \CC(q,\dot{q}))]_\SG\big),
  \end{equation*}
  where $\ti{\jgsg}$ is the adjoint vector bundle of $\jgsg$.  This
  identification allows them to establish a reduced variational
  principle and the associated reduced equations of motion.

  Similarly, a discrete connection $\DC$ on the principal bundle
  $\pi:Q\rightarrow Q/\SG$ can be used to construct an isomorphic
  model for the discrete reduced space $(Q \times Q) / \SG$. Given
  $\DC$, we define $\alpha_{\DC} : (Q \times Q) / \SG \rightarrow
  (Q/\SG \times Q/\SG) \times_{Q/\SG} \tilde{\SG}$, an isomorphism of
  bundles over $Q/\SG$, by
  \begin{equation*}
    \alpha_\DC([(q_0,q_1)]_{\SG}) := ((\pi(q_0),\pi(q_1) ) , 
    [(q_0,\DC(q_0,q_1))]_{\SG}),
  \end{equation*}
  where $\tilde{\SG} := (Q\times \SG) / \SG$ is the adjoint bundle of
  $Q$ by $\SG$ with respect to the $\SG$-action on $Q \times \SG$
  given by $l_{g}^{Q \times \SG}(q,h) := (l_{g}^Q(q),ghg^{-1})$. This
  identification of spaces is used in
  \cite{ar:fernandez_tori_zuccalli-lagrangian_reduction_of_discrete_mechanical_systems},~\cite{ar:fernandez_tori_zuccalli-lagrangian_reduction_of_discrete_mechanical_systems_by_stages},
  and
  \cite{ar:marrero_martin_martinez-discrete_lagrangian_and_hamiltonian_mechanics_on_lie_groupoids}
  to study the reduction of discrete mechanical systems with
  symmetries. It is important to observe that, according to
  Remark~\ref{rem:existence_nonexistence_of_discrete_connections},
  this identification is only local for nontrivial $\SG$-bundles.
\end{remark}


\section{Existence of discrete connections}
\label{sec:existence_of_discrete_connections}

All principal $\SG$-bundles carry connections that can be constructed,
for instance, using Riemannian metrics on the bundle. In this section
we prove an existence result for discrete connections given
appropriate Riemannian metrics.

Let $\pi:Q\rightarrow Q/\SG$ be a principal $\SG$-bundle and
$\ip{}{}_Q$ a $\SG$-invariant Riemannian metric on $Q$. The vertical
bundle $\mathcal{V}$ has an orthogonal complement, the
\jdef{horizontal bundle} $\mathcal{H}\subset TQ$; it is easy to check
that $\mathcal{H}$ defines a connection $\CCp{\ip{}{}_Q}$ on the
principal $\SG$-bundle $\pi$. Then there is a unique metric
$\ip{}{}_{Q/\SG}$ on $Q/\SG$ that turns $\pi$ into a Riemannian
submersion, \ie, for each $q\in Q$,
$d\pi(q)|_{\mathcal{H}_q}:\mathcal{H}_q\rightarrow T_{\pi(q)}(Q/\SG)$
is an isometry.

By Theorems 8.7 in Chapter III and 3.6 in Chapter IV
in~\cite{bo:kobayashi_nomizu-foundations-v1}, for every $r\in Q/\SG$,
there is an open set $W_r\subset Q/\SG$ such that any two points in
$W_r$ can be joined by a unique length minimizing geodesic (with
respect to $\ip{}{}_{Q/\SG}$) lying in $W_r$. Furthermore, for each
$r'\in W_r$ there is a normal coordinate neighborhood centered at $r'$
containing $W_r$. Define 
\begin{equation}
  \label{eq:U_in_terms_of_W_r}
  \mathcal{U}:= \cup_{r\in Q/\SG} (\pi\times \pi)^{-1}(W_{r}) \subset Q\times Q,
\end{equation}
that is open because the sets $W_r$ are open and $\pi$ is
continuous. Also, from the definition, $\mathcal{U}$ is
$\SG\times\SG$-invariant (for the product $\SG$-action) and contains
the diagonal $\Delta_Q$.

Let $(q_0,q_1)\in \mathcal{U}$. As $\pi(q_0), \pi(q_1) \in W_r$ for
some $r\in Q/\SG$, they can be joined by a unique geodesic
$\gamma:[0,1]\rightarrow W_r$ such that $\gamma(0)=\pi(q_0)$ and
$\gamma(1)=\pi(q_1)$. Being $\CCp{\ip{}{}_Q}$ a connection on the
principal $\SG$-bundle $\pi: Q\rightarrow Q/\SG$, by Proposition 3.1
in~\cite{bo:kobayashi_nomizu-foundations-v1}, there is an
$\CCp{\ip{}{}_Q}$-horizontal lift\footnote{As we will be considering
  only one connection on $\pi$, in what follows, we omit its reference
  when we consider horizontal lifts of paths.}
$\ti{\gamma}:[0,1]\rightarrow Q|_{W_r}$ of $\gamma(t)$ to $Q$ such that
$\ti{\gamma}(0) = q_0$ and $\ti{\gamma}(1) \in Q_{\pi(q_1)}$. In
addition, as $\gamma$ is a geodesic and $\pi$ is a Riemannian
submersion, by Proposition 3.1
in~\cite{ar:hermann-a_sufficient_condition_that_a_mapping_of_riemannian_manifolds_be_a_fibre_bundle},
$\ti{\gamma}$ is a geodesic for $\ip{}{}_Q$. The value
$\ti{\gamma}(1)$ is independent of the open set $W_r$ chosen for the
construction. Indeed, if we pick another open set $W_{r'}$ containing
$\pi(q_0)$ and $\pi(q_1)$ we would have two length minimizing
geodesics $\gamma_r$ and $\gamma_{r'}$ joining $\pi(q_0)$ to
$\pi(q_1)$ and contained in $W_r$ and $W_{r'}$ respectively. Then, by
Theorem 10.4 in~\cite{bo:milnor-morse_theory}, $\gamma_r([0,1]) =
\gamma_{r'}([0,1])$, so that both geodesics are contained in $W_r\cap
W_{r'}$ and, by the uniqueness of the geodesics in $W_r$, $\gamma_r(t)
= \gamma_{r'}(t)$ for all $t\in [0,1]$.

\begin{remark}
  The open subsets $W_r\subset Q/\SG$ introduced above are not
  necessarily unique. The same, in principle, applies to the open subset
  $\mathcal{U}\subset Q\times Q$ defined by~\eqref{eq:U_in_terms_of_W_r}.
\end{remark}

Choosing a family $\{W_r:r\in Q/\SG\}$ as above and constructing
$\mathcal{U}$ with~\eqref{eq:U_in_terms_of_W_r}, define
$\DCp{\ip{}{}_Q}:\mathcal{U}\rightarrow \SG$ by
\begin{equation}
  \label{eq:DC_given_riemannian_metric}
  \DCp{\ip{}{}_Q}(q_0,q_1) := \kappa(\ti{\gamma}(1),q_1).
\end{equation}

\begin{theorem}\label{thm:existence_of_discrete_connections_given_riemm_metric}
  Let $(Q,\ip{}{}_Q)$ be a Riemannian manifold where the Lie group
  $\SG$ acts by isometries in such a way that $\pi:Q\rightarrow Q/\SG$
  is a principal $\SG$-bundle. Then, there is a discrete connection
  $\DCp{\ip{}{}_Q}$ on $\pi$ whose domain is $\jgU = \mathcal{U}$ and
  whose discrete connection form is given
  by~\eqref{eq:DC_given_riemannian_metric}.
\end{theorem}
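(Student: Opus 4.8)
The plan is to verify that the map $\DCp{\ip{}{}_Q}:\mathcal{U}\rightarrow\SG$ defined by~\eqref{eq:DC_given_riemannian_metric} is a discrete connection form in the sense of Definition~\ref{def:discrete_connection_form}, and then invoke Theorem~\ref{thm:discrete_connection_vs_one_form-equivalence} (equivalently, the converse part of Theorem~\ref{thm:discrete_connection_vs_one_form}) to conclude that $Hor:=(\DCp{\ip{}{}_Q})^{-1}(\{e\})$ defines a discrete connection with the stated domain and connection form. So there are four things to check: (i) $\mathcal{U}$ is open, $\SG\times\SG$-invariant, and contains $\Delta_Q$; (ii) $\DCp{\ip{}{}_Q}$ is smooth; (iii) $\DCp{\ip{}{}_Q}(q_0,q_0)=e$ for all $q_0\in Q$; and (iv) the equivariance relation~\eqref{eq:Ad_GxG} holds.

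Item (i) is already established in the discussion preceding~\eqref{eq:DC_given_riemannian_metric}: $\mathcal{U}$ is open since the $W_r$ are open and $\pi$ is continuous, it is $\SG\times\SG$-invariant by construction, and it contains $\Delta_Q$ since $\pi(q_0)\in W_{\pi(q_0)}$. For item (iii), when $q_0=q_1$ the minimizing geodesic $\gamma$ joining $\pi(q_0)$ to itself in $W_r$ is constant (uniqueness forces this), so its horizontal lift $\ti{\gamma}$ starting at $q_0$ is also constant, whence $\ti{\gamma}(1)=q_0=q_1$ and $\kappa(q_0,q_0)=e$. Item (ii), smoothness, I would obtain by exhibiting $\DCp{\ip{}{}_Q}$ locally as a composition of smooth maps: restricting to a region where a single $W_r$ and a local trivialization are available, the endpoint $\ti{\gamma}(1)$ depends smoothly on $(q_0,q_1)$ because it is obtained by solving the geodesic ODE on $Q$ (smooth dependence on initial conditions via the exponential map of $\ip{}{}_Q$) with the horizontality constraint, all of which is smooth; then $\kappa$ is smooth as noted in the text, and the independence of $\ti{\gamma}(1)$ from the choice of $W_r$ (proved in the excerpt using Milnor's Theorem 10.4) guarantees the local formulas patch to a globally well-defined smooth map.

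The main obstacle is item (iv), the equivariance~\eqref{eq:Ad_GxG}. The key observation is that the construction of $\ti{\gamma}(1)$ is natural with respect to the $\SG$-action. Given $(q_0,q_1)\in\mathcal{U}$ and $g_0,g_1\in\SG$, since $l^Q$ acts by isometries and $\pi$ is a Riemannian submersion, $\pi(l^Q_{g_0}(q_0))=\pi(q_0)$ and $\pi(l^Q_{g_1}(q_1))=\pi(q_1)$, so the \emph{same} minimizing geodesic $\gamma$ in $Q/\SG$ is used. Its horizontal lift starting at $l^Q_{g_0}(q_0)$ is $l^Q_{g_0}\circ\ti{\gamma}$, because $\mathcal{H}$ is $\SG$-invariant (the connection $\CCp{\ip{}{}_Q}$ comes from the $\SG$-invariant metric) and $l^Q_{g_0}$ is a bundle automorphism covering the identity; hence the new endpoint is $l^Q_{g_0}(\ti{\gamma}(1))$. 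Therefore
\begin{equation*}
  \DCp{\ip{}{}_Q}(l^Q_{g_0}(q_0),l^Q_{g_1}(q_1)) = \kappa(l^Q_{g_0}(\ti{\gamma}(1)),\, l^Q_{g_1}(q_1)).
\end{equation*}
It remains to compute the right-hand side using the defining property of $\kappa$. Writing $g:=\kappa(\ti{\gamma}(1),q_1)$, so that $l^Q_g(\ti{\gamma}(1))=q_1$, we get $l^Q_{g_1}(q_1) = l^Q_{g_1}(l^Q_g(\ti{\gamma}(1))) = l^Q_{g_1 g g_0^{-1}}(l^Q_{g_0}(\ti{\gamma}(1)))$, which by definition of $\kappa$ means $\kappa(l^Q_{g_0}(\ti{\gamma}(1)), l^Q_{g_1}(q_1)) = g_1 g g_0^{-1} = g_1\,\DCp{\ip{}{}_Q}(q_0,q_1)\,g_0^{-1}$, as required. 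Once these four items are in place, Theorem~\ref{thm:discrete_connection_vs_one_form-equivalence} immediately yields that $\DCp{\ip{}{}_Q}$ is the discrete connection form of a discrete connection, and a direct check (exactly as in the proof of Theorem~\ref{thm:discrete_connection_vs_one_form}) shows its domain is $\jgU=\mathcal{U}$.
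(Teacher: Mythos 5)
Your proposal is correct and follows essentially the same route as the paper's proof: verify that $\mathcal{U}$ and $\DCp{\ip{}{}_Q}$ satisfy the hypotheses of the converse part of Theorem~\ref{thm:discrete_connection_vs_one_form}, with the equivariance~\eqref{eq:Ad_GxG} obtained from the identity $\ti{\gamma}_{l^Q_{g_0}(q_0)} = l^Q_{g_0}\circ\ti{\gamma}_{q_0}$ (the paper derives this from $l^Q_{g_0}$ being an isometry preserving horizontality, which is the same fact as your $\SG$-invariance of $\mathcal{H}$). Your explicit computation of $\kappa(l^Q_{g_0}(\ti{\gamma}(1)), l^Q_{g_1}(q_1)) = g_1 g g_0^{-1}$ merely spells out a step the paper states in one line.
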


\begin{proof}
  From the previous construction, $\mathcal{U}\subset Q\times Q$ is an
  open set, $\SG\times \SG$-invariant for the product $\SG$-action and
  contains the diagonal $\Delta_Q$. In addition, by the smooth
  dependence of the geodesics on both the initial and final point as
  well as the initial point and velocity, $\DCp{\ip{}{}_Q}$ is
  smooth. For any $q_0\in Q$, the unique length minimizing geodesic
  joining $\pi(q_0)$ to itself in $Q/\SG$ is the constant path, so
  that its horizontal lift is, again, the constant path and we
  conclude that
  \begin{equation*}
    \DCp{\ip{}{}_Q}(q_0,q_0) := \kappa(\ti{\gamma}(1),q_0) = 
    \kappa(q_0,q_0) = e.
  \end{equation*}
  Let $(q_0,q_1)\in \mathcal{U}$ and $g_0,g_1\in\SG$. for any $r\in
  Q/\SG$ such that $\pi(q_0), \pi(q_1)\in W_r$, there is a unique
  length minimizing geodesic $\gamma:[0,1]\rightarrow Q/\SG$ contained
  in $W_r$ and such that $\gamma(0)=\pi(q_0)$ and
  $\gamma(1)=\pi(q_1)$. As $\pi(q_0) = \pi(l^Q_{g_0}(q_0))$ we can
  consider the horizontal lifts $\ti{\gamma}_{q_0}$ and
  $\ti{\gamma}_{l^Q_{g_0}(q_0)}$ of $\gamma$ starting at $q_0$ and
  $l^Q_{g_0}(q_0)$ respectively. As $\SG$ acts on $Q$ by isometries,
  for any $g_0 \in \SG$, $l^Q_{g_0}$ is an isometry of $Q$. Then,
  $l^Q_{g_0}(\ti{\gamma}_{q_0}(t))$ is a horizontal geodesic in $Q$
  starting at $l^Q_{g_0}(q_0)$. Furthermore,
  $\pi(l^Q_{g_0}(\ti{\gamma}_{q_0}(t))) = \pi(\ti{\gamma}_{q_0}(t)) =
  \gamma(t)$, so that $l^Q_{g_0}(\ti{\gamma}_{q_0}(t))$ is the
  horizontal lift of $\gamma$ starting at $l^Q_{g_0}(q_0)$, that is,
  $\ti{\gamma}_{l^Q_{g_0}(q_0)} = l^Q_{g_0} \circ
  \ti{\gamma}_{q_0}$. Then
  \begin{equation*}
    \begin{split}
      \DCp{\ip{}{}_Q}(l^Q_{g_0}(q_0), l^Q_{g_1}(q_1)) =&
      \kappa(\ti{\gamma}_{l^Q_{g_0}(q_0)}(1), l^Q_{g_1}(q_1)) =
      \kappa(l^Q_{g_0}(\ti{\gamma}_{q_0}(1)), l^Q_{g_1}(q_1)) \\=& g_1
      \kappa(\ti{\gamma}_{q_0}(1), q_1) g_0^{-1} = g_1
      \DCp{\ip{}{}_p}(q_0,q_1) g_0^{-1},
    \end{split}
  \end{equation*}
  so that $\DCp{\ip{}{}_Q}$ satisfies condition~\eqref{eq:Ad_GxG}. The
  main result follows from
  Theorem~\ref{thm:discrete_connection_vs_one_form}.
\end{proof}

\begin{remark}\label{rem:horizontal_subamnifold_for_riemannian_metric}
  The submanifold $Hor_{\DCp{\ip{}{}_Q}}$ underlying the discrete
  connection $\DCp{\ip{}{}_Q}$ constructed in
  Theorem~\ref{thm:existence_of_discrete_connections_given_riemm_metric}
  is $Hor_{\DCp{\ip{}{}_Q}} = (\tau_Q\times \exp)(\mathcal{D}\cap
  \mathcal{H})$ where $\tau_Q:TQ\rightarrow Q$ is the canonical
  projection, $\mathcal{D}\subset TQ$ is an open subset ---containing
  the image of the zero section and contained in the domain of the
  exponential mapping--- and $\mathcal{H}$ is the horizontal
  distribution.
\end{remark}

\begin{example}\label{ex:riemannian_DC_in_hopf_bundle}
  Let $\HH$ be the $\R$-algebra of \jdef{quaternions} together with
  its canonical inner product $\ip{}{}_\HH$ and basis $\{\Qo, \Qi,
  \Qj, \Qk\}$. The submanifold of unit norm quaternions is the sphere
  $S^3$ with the round metric, while the unit norm imaginary
  quaternions form $S^2$ with the round metric. Define $\phi:S^3
  \rightarrow S^2$ by $\phi(q) := \jconj{q} \Qi q$, where $\jconj{q}$
  denotes the conjugated quaternion (change the sign of the imaginary
  part of $q$). It is a well known fact that $\phi$ is a principal
  $U(1)$-bundle for the $U(1)$-action on $S^3$ given by the isometries
  $l^{S^3}_{e^{i\theta}}(q) := (\cos(\theta)\Qo + \sin(\theta)\Qi)
  q$. This bundle is the \jdef{Hopf bundle}. The map $\phi$ is a
  Riemannian submersion if we consider \emph{twice} the round metric
  in $S^2$.

  The construction of a discrete connection $\DCp{\ip{}{}_{S^3}}$
  associated to the Hopf bundle described at the beginning of the
  section is as follows. When $q\in\HH-\{0\}$ we denote by $\langle q
  \rangle\subset \HH$ the $1$-dimensional \emph{real} subspace of
  $\HH$ generated by $q$.  For $q\in S^3$, the vertical bundle is
  $\mathcal{V}(q) = \langle \Qi q\rangle \subset T_qS^3 = \langle
  q\rangle ^\perp \subset \HH$, so that the horizontal bundle is
  $\mathcal{H}_{q} = \langle \Qi q\rangle^\perp \subset T_qS^3 \subset
  \HH$. For $r\in S^2$, the open subsets $W_r := \{ r' \in S^2 :
  \ip{r}{r'}_\HH >0\}\subset S^2$ have the required geodesic
  uniqueness properties. Then, $\mathcal{U} := \cup_{r\in S^2}
  (\phi\times \phi)^{-1}(W_r) = \{(q_0,q_1) \in S^3\times S^3 :
  \phi(q_0) \neq -\phi(q_1)\}$; the last identity is due to the fact
  that if $\phi(q_0) \neq -\phi(q_1)$ then $\phi(q_0),\phi(q_1)\in
  W_r$ for $r:=
  \frac{\phi(q_0)+\phi(q_1)}{\norm{\phi(q_0)+\phi(q_1)}}$. Using the
  explicit form of the geodesics in $S^2$ and $S^3$ as maximal
  circles, after some work, it can be seen that the domain of the
  discrete connection form $\DCp{\ip{}{}_{S^3}}$ is
  \begin{equation*}
    \jgU := \mathcal{U} = \{(q_0,q_1)\in
    S^3\times S^3: P_\Qo(q_1\conj{q_0})^2 + P_\Qi(q_1\conj{q_0})^2\neq 0\},
  \end{equation*}
  and the connection form is
  \begin{equation*}
    \DCp{\ip{}{}_{S^3}}(q_0,q_1) = \frac{P_\Qo(q_1\conj{q_0}) +  
      P_\Qi(q_1\conj{q_0}) i}{\abs{P_\Qo(q_1\conj{q_0}) + 
        P_\Qi(q_1\conj{q_0}) i}} \in U(1),
  \end{equation*}
  where $P_\Qo, P_\Qi : \HH\rightarrow \R$ are the projections on the
  corresponding coordinates.
\end{example}

\begin{remark}
  In the same general setting as in
  Theorem~\ref{thm:existence_of_discrete_connections_given_riemm_metric},
  Example 4.1
  of~\cite{ar:leok_marsden_weinstein-a_discrete_theory_of_connections_on_principal_bundles}
  constructs a function $\DC:Q\times Q\rightarrow \SG$ as
  follows. Given two points $q_0,q_1\in Q$, let $q_{01}$ be the
  geodesic in $Q$ satisfying $q_{01}(0)=q_0$ and $q_{01}(1)=1$ (this
  may actually restrict the domain of $\DC$ to an open neighborhood of
  the diagonal in $Q\times Q$). Then, let $x_{01} := \pi\circ q_{01}$
  and $\ti{q}_{01}$ the horizontal lift (with respect to the
  horizontal distribution $\mathcal{H}$) of $x_{01}$ to $Q$ starting
  at $q_0$. Finally, define
  \begin{equation}
    \label{eq:DC_ex_4.1_LMW}
    \DC(q_0,q_1) := \kappa(\ti{q}_{01}(1),q_1).
  \end{equation}
  Pairs in the ``horizontal manifold'' $\DC^{-1}(\{e\})$ are the
  endpoints of horizontal geodesics in $Q$. It follows that this
  manifold is essentially the same as $Hor_{\DCp{\ip{}{}_{Q}}}$
  corresponding to the discrete connection constructed by
  Theorem~\ref{thm:existence_of_discrete_connections_given_riemm_metric}
  (see Remark~\ref{rem:horizontal_subamnifold_for_riemannian_metric}).
  Still, the function $\DC$ defined in~\eqref{eq:DC_ex_4.1_LMW} may
  not be a discrete connection form: it may fail to satisfy
  condition~\eqref{eq:Ad_GxG}. For instance, in the context of the
  Hopf bundle considered in
  Example~\ref{ex:riemannian_DC_in_hopf_bundle}, it can be checked
  explicitly that, when $\theta\in (-\frac{\pi}{4},\frac{\pi}{4})$,
  \begin{equation*}
    \DC\left(\Qo, 
      l^{S^3}_{e^{i\theta}}\left(\frac{\Qo+\Qj}{\sqrt{2}}\right)\right) = 
    e^{i\beta_\theta} \stext{ for } 
    \beta_\theta := \frac{\sin(\theta)
      \arccos(\frac{\cos(\theta)}{\sqrt{2}})}{\sqrt{2-\cos(\theta)^2}},
  \end{equation*}
  whereas
  \begin{equation*}
    e^{i\theta} \DC\left(\Qo,\frac{\Qo+\Qj}{\sqrt{2}}\right) = e^{i\theta}.
  \end{equation*}
  Evaluation of the first derivatives of those expressions at
  $\theta=0$ confirms that~\eqref{eq:Ad_GxG} does not hold in this
  case.
\end{remark}




\def\cprime{$'$} \def\polhk#1{\setbox0=\hbox{#1}{\ooalign{\hidewidth
  \lower1.5ex\hbox{`}\hidewidth\crcr\unhbox0}}} \def\cprime{$'$}
  \def\cprime{$'$}
\providecommand{\bysame}{\leavevmode\hbox to3em{\hrulefill}\thinspace}
\providecommand{\MR}{\relax\ifhmode\unskip\space\fi MR }
\providecommand{\MRhref}[2]{%
  \href{http://www.ams.org/mathscinet-getitem?mr=#1}{#2}
}
\providecommand{\href}[2]{#2}


\end{document}